\newtheorem{thm}{Theorem}[section]
\newtheorem{lem}[thm]{Lemma}
\theoremstyle{definition}
\theoremstyle{remark}
\newtheorem{rem}[thm]{\bf Remark}
\newtheorem{exm}[thm]{\bf Example}
\numberwithin{equation}{section}
\begin{document}
\title[Three results on Frobenius categories]
{Three results on Frobenius categories}
\author[Xiao-Wu Chen] {Xiao-Wu Chen}
%\thanks{$^*$ The corresponding author}
\thanks{}
\subjclass{18E10, 16G50, 18E30}%
\date{\today}%

\keywords{Frobenius category, Cohen-Macaulay module,
weighted projective line, matrix factorization, minimal monomorphism}%
\thanks{This project was supported by Alexander von Humboldt
Stiftung and National Natural Science Foundation of China
(No.10971206).}
\thanks{E-mail:
xwchen$\symbol{64}$mail.ustc.edu.cn}
 \maketitle

\dedicatory{}%
\commby{}%
%\begin{center}
%Department of Mathematics\\
% University of Science and
%Technology of China \\Hefei 230026, P. R. China
%\end{center}

\begin{abstract}
This paper consists of three results on Frobenius categories: (1) we
give sufficient conditions on when a factor category of a Frobenius
category is still a Frobenius category; (2) we show that any
Frobenius category is equivalent to an extension-closed exact
subcategory of the Frobenius category formed by Cohen-Macaulay
modules over some additive category; this is an analogue of
Gabriel-Quillen's embedding theorem for Frobenius categories; (3) we
show that under certain conditions an exact category with enough
projective and enough injective objects allows a natural new exact
structure, with which the given category becomes Frobenius. Several applications
of the results are discussed.
\end{abstract}

\section{Introduction}

Recently Ringel and Schmidmeier study intensively the classification
problem in the (graded) submodule category over the truncated
polynomial algebra $k[t]/{(t^p)}$ (\cite{RS08}). Here, $k$ is a
field and $p\geq 1$ is a natural number. This problem goes back to
Birkhoff and is studied by Arnold and Simson. For an account of
the history, we refer to \cite{RS08}. The complexity of this
classification problem depends on the parameter $p$. According to
$p<6$, $p=6$ and $p>6$, the classification problem turns out to be
finite, tame and wild, respectively. We denote by
$\mathcal{S}(\widetilde{p})$ the graded submodule category which is
called  the category of Ringel-Schmidmeier in \cite{Ch09}. It has a
natural exact structure and becomes an exact category in the sense
of Quillen; moreover, it is a Frobenius category.

In  more recent work (\cite{KLM2}), Kussin, Lenzing and Meltzer give
a surprising link between the category $\mathcal{S}(\widetilde{p})$
of Ringel-Schmidmeier and the category of vector bundles on the
weighted projective line of type $(2, 3, p)$. To be more precise,
let $\mathbb{X}$ be the weighted projective line of type $(2, 3, p)$
in the sense of Geigle and Lenzing (\cite{GL}). Denote by ${\rm
vect}\; \mathbb{X}$ the category of vector bundles on $\mathbb{X}$.
It has a natural exact structure such that it is a Frobenius
category (\cite{KLM1}).  Following \cite{KLM2} we denote by
$\mathcal{F}$ the additive closure of the so-called fading line
bundles. Consider the factor category ${\rm vect}\;
\mathbb{X}/{[\mathcal{F}]}$ of ${\rm vect}\; \mathbb{X}$ modulo
those morphisms factoring through $\mathcal{F}$. One of the main
results in \cite{KLM2} states that there is an equivalence of
categories between ${\rm vect}\; \mathbb{X}/{[\mathcal{F}]}$ and
$\mathcal{S}(\widetilde{p})$; also see Example
\ref{exm:KLMcontinued}. From this equivalence the authors recover
some major results in \cite{RS08} via certain known results on
vector bundles over the weighted projective lines. Note that
according to $p< 6$, $p=6$ and $p>6$ the weighted projective line
$\mathbb{X}$ is domestic, tubular and wild, respectively.

We have noted above that the category $\mathcal{S}(\widetilde{p})$
of Ringel-Schmidmeier is Frobenius. Hence via the equivalence
mentioned above one infers that the  factor category ${\rm vect}\;
\mathbb{X}/{[\mathcal{F}]}$ is also a Frobenius category; see
\cite[Theorem A]{KLM2}. However a direct argument of this surprising
fact seems missing. More generally, one may ask when a factor
category of a Frobenius category is still Frobenius. This is one of
the motivations of the present paper. Another motivation is to understand
the minimal monomorphism  in the sense of Ringel-Schmidmeier (\cite{RS06}), 
which plays an important role in the study of  Auslander-Reiten sequences in submodule categories.

The paper  is organized as follows, which mainly consists of
three results on Frobenius categories.  We collect in Section 2 some basic facts and
notions on exact categories and Frobenius categories. In Section 3
we give sufficient conditions on when a factor category of a
Frobenius category is still Frobenius; see Theorem \ref{thm:partI}.
We apply the obtained result to recover \cite[Theorem A]{KLM2},
modulo a certain technical fact which is somehow hidden in
\cite{KLM2}. We also apply the result to the Frobenius category of
matrix factorizations. In Section 4 we prove a general result on
Frobenius categories: each Frobenius category is equivalent, as
exact categories, to an extension-closed exact subcategory of the
Frobenius category formed by Cohen-Macaulay modules over some
additive category; see Theorem \ref{thm:partII}. This can  be viewed
as an analogue of Gabriel-Quillen's embedding theorem for Frobenius
categories. We observe that the category
$\mathcal{S}(\widetilde{p})$ of Ringel-Schmidmeier can be viewed as
the category of Cohen-Macaulay modules over some graded algebras
(\cite{Ch09}). Together with this observation and the result in
Section 3, our general result recovers a part of \cite[Theorem
C]{KLM2}. In Section 5 we give sufficient conditions such that on an
exact category with enough projective and enough injective objects
there exists another natural exact structure, with which the given
category becomes Frobenius; see Theorem \ref{thm:partIII}.  An
application of this result allows us to interpret the minimal
monomorphism operation in \cite{RS06} as a triangle functor, which
is right adjoint to an inclusion triangle functor.

\section{Preliminaries on exact categories}
In this section we collect some basic facts and notions on exact
categories and Frobenius categories. The basic reference
is \cite[Appendix A]{Ke3}.  For a systematical treatment of
exact category, we refer to \cite{Bu10}.

\vskip 5pt

Let $\mathcal{A}$ be an additive category. A \emph{composable pair}
of morphisms is a sequence $X\stackrel{i} \rightarrow Y \stackrel{d}
\rightarrow Z$; such a composable pair is denoted by $(i, d)$. Two
composable pairs $(i, d)$ and $(i', d')$ are \emph{isomorphic}
provided that there are isomorphisms $f\colon X\rightarrow X'$,
$g\colon Y\rightarrow Y'$ and $h\colon Z\rightarrow Z'$ such that
$g\circ i=i'\circ f$ and $h\circ d=d' \circ g$. A composable pair
$(i, d)$ is called a \emph{kernel-cokernel pair}  provided that
$i={\rm Ker}\; d$ and $d={\rm Cok}\; i$.

 An \emph{exact structure} on an additive category $\mathcal{A}$
 is a chosen class $\mathcal{E}$ of  kernel-cokernel pairs in
 $\mathcal{A}$, which is closed under isomorphisms and
is subject to the following axioms (Ex0), (Ex1), (Ex1)$^{\rm op}$,
(Ex2) and (Ex2)$^{\rm op}$. A pair $(i, d)$ in the chosen class
$\mathcal{E}$ is called a \emph{conflation}, while $i$ is called an
\emph{inflation} and $d$ is called a \emph{deflation}. The pair
$(\mathcal{A}, \mathcal{E})$ is called an \emph{exact category} in
the sense of Quillen (\cite{Qui73}); sometimes we suppress the class
$\mathcal{E}$ and just say that $\mathcal{A}$ is an exact category.

Following \cite[Appendix A]{Ke3}, the axioms of  exact category are
listed as follows:

\begin{enumerate}
\item[(Ex0) \;  ] the identity morphism of the zero object is a deflation;

\item[(Ex1) \;  ] a composition of two deflations is a deflation;

\item[(Ex1)$^{\rm op}$] a composition of two inflations is an
inflation;

\item[(Ex2) \;  ] for a deflation $d\colon Y \rightarrow Z$ and a
morphism $f\colon Z'\rightarrow Z$ there exists a pullback diagram
such that $d'$ is a deflation:
\[\xymatrix{  Y' \ar@{.>}[r]^{d'} \ar@{.>}[d]^-{f'} & Z' \ar[d]^{f} \\
   Y  \ar[r]^-{d} & Z}\]

\item[(Ex2)$^{\rm op}$] for an inflation $i\colon X \rightarrow Y$ and a
morphism $f\colon X\rightarrow X'$ there exists a  pushout diagram
such that $i'$ is an inflation:
\[\xymatrix{  X \ar[r]^-{i} \ar[d]^-{f} & Y \ar@{.>}[d]^-{f'} \\
   X'  \ar@{.>}[r]^-{i'} & Y'}\]
\end{enumerate}

Let us remark that the axiom (Ex1)$^{\rm op}$ can be deduced from
the other axioms; see \cite[Appendix A]{Ke3}.

 For an exact category $\mathcal{A}$, a full additive subcategory
$\mathcal{B}\subseteq \mathcal{A}$ is said to be
\emph{extension-closed} provided that for any conflation
$X\stackrel{i} \rightarrow Y \stackrel{d}\rightarrow Z$ with $X,
Z\in \mathcal{B}$ we have $Y\in \mathcal{B}$. In this case, the
subcategory $\mathcal{B}$ inherits the exact structure from
$\mathcal{A}$ to become an exact category. We will call such a
subcategory an \emph{extension-closed exact subcategory}. Observe
that any abelian category  has a natural exact structure such that
conflations are induced by short exact sequences. Consequently, any
full additive subcategory in an abelian category which is closed
under extensions has a natural exact structure and then becomes an
exact category.

Recall that an additive functor $F\colon \mathcal{B}\rightarrow
\mathcal{A}$ between two exact categories is called \emph{exact}
provided that it sends conflations to conflations; an exact functor
$F\colon \mathcal{B}\rightarrow \mathcal{A}$ is said to be an
\emph{equivalence of exact categories} provided that $F$ is an
equivalence and there exists a quasi-inverse of $F$ which is exact.

From now on $\mathcal{A}$ is an exact category. We will need the
following two facts. For the first fact, we refer to the first step
in the proof of \cite[Proposition A.1]{Ke3}; for the second one, we
refer to the axiom c) in the proof of \cite[Proposition A.1]{Ke3}

\begin{lem}\label{lem:lem1}
Consider the diagram in {\rm (Ex2)}. Then the sequence
$$Y'\stackrel{\binom{d'}{-f'}}\longrightarrow Z'\oplus Y \stackrel{(f,
d)}\longrightarrow Z$$ is a conflation and we have a commutative
diagram such that the  two rows are conflations:
\[\xymatrix{  X\ar@{=}[d] \ar[r]^{i'} & Y' \ar[r]^{d'} \ar[d]^-{f'} & Z' \ar[d]^{f} \\
  X\ar[r]^-{f'\circ i'} &  Y  \ar[r]^-{d} & Z}\]
\end{lem}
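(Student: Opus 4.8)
The plan is to prove the two assertions separately, each by direct manipulation of the pullback square in {\rm (Ex2)}, using only the universal property of pullbacks together with the closure of the conflation class under isomorphism.

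For the first assertion, I would begin by checking that $(f, d)\colon Z'\oplus Y\to Z$ is a deflation. It factors as $(f,\mathrm{id}_Z)\circ(\mathrm{id}_{Z'}\oplus d)$: the map $\mathrm{id}_{Z'}\oplus d$ is a deflation, being the direct sum of the split conflation $0\to Z'\xrightarrow{\mathrm{id}}Z'$ with the conflation $\mathrm{Ker}\,d\to Y\xrightarrow{d}Z$ through $d$ --- here one uses the standard facts, recorded in \cite[Appendix A]{Ke3}, that split exact sequences are conflations and that direct sums of conflations are conflations --- and $(f,\mathrm{id}_Z)$ is a split epimorphism, hence again a deflation; so {\rm (Ex1)} applies. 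Now write the conflation through $(f,d)$ as $K\xrightarrow{k}Z'\oplus Y\xrightarrow{(f,d)}Z$ with $k=\mathrm{Ker}(f,d)$. Commutativity $f\circ d'=d\circ f'$ of the pullback square gives $(f,d)\circ\binom{d'}{-f'}=0$, and the universal property of the pullback shows immediately that $\binom{d'}{-f'}$ is a kernel of $(f,d)$. Since kernels are unique up to isomorphism, the composable pair $(\binom{d'}{-f'},(f,d))$ is isomorphic to $(k,(f,d))$ and is therefore a conflation.

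For the second assertion, since $d'$ is a deflation there is a conflation $X\xrightarrow{i'}Y'\xrightarrow{d'}Z'$ with $i'=\mathrm{Ker}\,d'$; this is the top row, and the two squares of the displayed diagram commute (the left one trivially, the right one being the pullback square). It remains to see that $(f'\circ i',d)$ is a conflation, for which I would show that $f'\circ i'$ is a kernel of $d$. On the one hand $d\circ(f'\circ i')=f\circ d'\circ i'=0$, so $f'\circ i'$ factors through $i:=\mathrm{Ker}\,d$ as $i\circ\alpha$ for a unique $\alpha\colon X\to\mathrm{Ker}\,d$. On the other hand, from $d\circ i=0=f\circ 0$ the pullback property lifts $i$ to a map $\mathrm{Ker}\,d\to Y'$ which, composed with $d'$, vanishes and hence factors through $i'$, producing $\beta\colon\mathrm{Ker}\,d\to X$; the uniqueness clauses of the pullback property, together with the fact that $i$ and $i'$ are monomorphisms, yield $\alpha\circ\beta=\mathrm{id}$ and $\beta\circ\alpha=\mathrm{id}$. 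Thus $f'\circ i'=i\circ\alpha$ with $\alpha$ an isomorphism, so $(f'\circ i',d)$ is isomorphic to the conflation $(i,d)$ attached to $d$, and is therefore a conflation.

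There is no real obstacle here: the argument is essentially the one sketched in the first step of the proof of \cite[Proposition A.1]{Ke3} for the first part, and the construction in its axiom c) for the second. The only points needing a little attention are the structural lemmas invoked to see that $(f,d)$ is a deflation (split sequences and direct sums of conflations are conflations), and the routine bookkeeping identifying the two kernels $\mathrm{Ker}\,d'=X$ and $\mathrm{Ker}\,d$ in the second part; everything else is formal.
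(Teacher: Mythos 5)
Your proof is correct: you show that $(f,d)$ is a deflation, use the universal property of the pullback square to identify $\binom{d'}{-f'}$ as its kernel, and then identify the kernel of $d'$ with the kernel of $d$ to get the second diagram --- which is essentially the argument of \cite[Proposition A.1]{Ke3} that the paper itself cites for this lemma instead of giving its own proof. The only small point worth noting is that the step ``$(f,\mathrm{id}_Z)$ is a split epimorphism, hence a deflation'' implicitly uses that this split epimorphism has a kernel (here obviously $\binom{\mathrm{id}_{Z'}}{-f}$), so that it is isomorphic to the split projection $Z'\oplus Z\to Z$; in this instance that is immediate, so the argument stands.
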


\begin{lem}\label{lem:lem2}
Let $d$ be a morphism such that $d\circ e$ is a deflation for some
morphism $e$. Assume further that $d$ has a kernel. Then $d$ is a
deflation. \hfill $\square$
\end{lem}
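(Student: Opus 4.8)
The plan is to construct an explicit conflation whose deflation is $d$. I would use the hypothesis twice: once through a pullback, to produce a conflation with right-hand term $Y$, and once through a pushout, to produce a conflation with right-hand term $Z$ and the \emph{same} inflation, and then glue the two. Throughout, fix a kernel $k\colon K\to Y$ of $d$ and a kernel $j\colon N\to W$ of the deflation $p:=d\circ e$, so that $N\xrightarrow{j}W\xrightarrow{p}Z$ is a conflation; since $d\circ(e\circ j)=p\circ j=0$, there is a unique $n\colon N\to K$ with $k\circ n=e\circ j$.

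First I would use (Ex2) to form a pullback of the deflation $p\colon W\to Z$ along $d\colon Y\to Z$, giving a deflation $\tilde d\colon P\to Y$ and a second projection $\tilde e\colon P\to W$. The cone $(e,\mathrm{id}_W)$ over the relevant cospan shows $\tilde e$ has a section, and, $\ker d$ existing, $\tilde e$ has a kernel, carried by $\tilde d$ isomorphically onto $K$; hence $P\cong W\oplus K$ with $\tilde e$ the projection to $W$ and $\tilde d$ identified with $(e,\ k)\colon W\oplus K\to Y$. By Lemma \ref{lem:lem1} the kernel of $\tilde d$ is isomorphic to $\ker p=N$, and transporting along $P\cong W\oplus K$ one checks that this kernel is $\binom{j}{-n}$ — its $W$-component is $j$ by the commutative diagram of Lemma \ref{lem:lem1}, and its $K$-component is forced to be $-n$ since $(e,\ k)\circ\binom{j}{-n}=ej-kn=0$ and $k$ is monic. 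So $N\xrightarrow{\binom{j}{-n}}W\oplus K\xrightarrow{(e,\,k)}Y$ is a conflation. Dually, $j$ being an inflation, (Ex2)$^{\mathrm{op}}$ and the dual of Lemma \ref{lem:lem1}, applied to the pushout of $j$ along $n$, yield a conflation $N\xrightarrow{\binom{j}{-n}}W\oplus K\xrightarrow{(w,\,\iota)}Q$ together with a conflation $K\xrightarrow{\iota}Q\xrightarrow{\bar p}Z$ satisfying $\bar p\circ w=p$ and $\bar p\circ\iota=0$.

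Now $(e,\ k)$ and $(w,\ \iota)$ are both cokernels of the single morphism $\binom{j}{-n}$, so there is a unique isomorphism $\phi\colon Q\to Y$ with $\phi\circ w=e$ and $\phi\circ\iota=k$. Then $d\circ\phi\circ\iota=d\circ k=0=\bar p\circ\iota$ and $d\circ\phi\circ w=d\circ e=p=\bar p\circ w$, so, $(w,\ \iota)$ being epic, $d\circ\phi=\bar p$. Hence the composable pair $(k,d)$ is isomorphic to the conflation $K\xrightarrow{\iota}Q\xrightarrow{\bar p}Z$, so it is itself a conflation and $d$ is a deflation.

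The delicate ingredients are routine facts about exact categories: that a pullback square transports the kernel of the leg parallel to the given morphism, that a split epimorphism having a kernel exhibits its source as the direct sum of its target and that kernel, and the dual of Lemma \ref{lem:lem1}; all follow from the axioms. The one genuine idea is that the pullback along $d$ and the pushout along $n$ can be arranged to share the inflation $\binom{j}{-n}$, which is what matches the two conflations. (Equivalently, the statement is the dual of Quillen's ``obscure axiom'', and the argument in \cite[Appendix A]{Ke3} follows this pattern.)
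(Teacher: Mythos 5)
Your proof is correct. For comparison: the paper gives no argument for this lemma at all --- it only points to the verification of axiom c) in the proof of \cite[Proposition A.1]{Ke3}, the statement being Quillen's ``obscure axiom'', which is redundant given the remaining axioms --- and your argument is essentially that standard one: pull the deflation $p=d\circ e$ back along $d$, use the section induced by $e$ together with the kernel $k$ of $d$ to split the pullback as $W\oplus K$, identify the resulting conflation as $N\rightarrow W\oplus K\rightarrow Y$ with inflation $\binom{j}{-n}$ and deflation $(e,\,k)$, then push $j$ out along $n$ to obtain a second conflation with the same inflation, and compare the two cokernels. All the auxiliary facts you invoke are available here: the kernel of one projection of a pullback square is carried by the other projection to the kernel of the parallel morphism (pure category theory), a split epimorphism admitting a kernel exhibits its source as a biproduct (additive category), and the dual of Lemma \ref{lem:lem1} is exactly what the paper itself uses in the proof of Lemma \ref{lem:lem3}. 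Two small points are worth making explicit, though neither causes trouble: the kernel object supplied by Lemma \ref{lem:lem1} in your first step is a priori only isomorphic to $N$, so one should rechoose the inflation through that isomorphism so that its $W$-component is literally $j$ before pinning down the $K$-component as $-n$; and the sign placement in the dual of Lemma \ref{lem:lem1} is a matter of convention, but any discrepancy is absorbed by composing with a diagonal sign automorphism of $W\oplus K$, since $\mathcal{E}$ is closed under isomorphisms of composable pairs. With these glosses, the final comparison $d\circ\phi=\bar p$ shows $(k,d)$ is isomorphic to a conflation, hence $d$ is a deflation, as claimed.
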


Recall that an object $P$ in $\mathcal{A}$ is \emph{projective}
provided that the functor ${\rm Hom}_\mathcal{A}(P, -)$ sends
conflations to short exact sequences; this is equivalent to that any
deflation ending at $P$ splits. The exact category $\mathcal{A}$ is
said to \emph{have enough projective objects} provided that each
object $X$ fits into a deflation $d\colon P\rightarrow X$ with $P$
projective. Dually one has the notions of \emph{injective object}
and \emph{having enough injective objects}.

An exact category $\mathcal{A}$ is said to be \emph{Frobenius}
provided that it has enough projective and enough injective objects,
 and the class of projective objects coincides with the class of
injective objects (\cite[Section 3]{He60}). The importance of
Frobenius categories lies in  that they give rise naturally to
triangulated categories; see \cite{Ha1} and \cite[1.2]{Ke3}.

The following notion will be convenient for us: for a Frobenius
category $\mathcal{A}$, an extension-closed exact subcategory
$\mathcal{B}\subseteq \mathcal{A}$ is said to be \emph{admissible}
provided that each object $B$ in $\mathcal{B}$ fits into conflations
$B\rightarrow P\rightarrow B'$ and $B''\rightarrow Q\rightarrow B$
in $\mathcal{B}$ such that $P, Q$ are projective in $\mathcal{A}$.
Note that an admissible subcategory $\mathcal{B}$ of a Frobenius
category $\mathcal{A}$ is still Frobenius; moreover, an object $B$
in $\mathcal{B}$ is projective if and only if it is projective
viewed as an object in $\mathcal{A}$.

\section{ Factor category of Frobenius category}

In this section we study a certain factor category of a Frobenius
category. We give sufficient conditions on when the factor category
inherits the exact structure from  the given Frobenius category such
that it becomes a Frobenius category. As an application, our result
specializes to \cite[Theorem A]{KLM2} modulo certain technical
results which are somehow hidden in \cite{KLM2}. We give an example to apply
our result to the category of matrix factorizations (\cite{Ei80}).

\vskip 5pt

Let $(\mathcal{A}, \mathcal{E})$ be a Frobenius category. Denote by
$\mathcal{P}$ the full subcategory consisting of projective objects.
Let $\mathcal{F}\subseteq \mathcal{P}$ be a full additive
subcategory. For two objects $X, Y$ in $\mathcal{A}$ denote by
$[\mathcal{F}](X, Y)$ the subgroup of ${\rm Hom}_\mathcal{A}(X, Y)$
consisting of those morphisms which factor through an object in
$\mathcal{F}$. Denote by $\mathcal{A}/[\mathcal{F}]$ the
\emph{factor category} of $\mathcal{A}$ modulo $\mathcal{F}$: the
objects are the same as the ones in $\mathcal{A}$, for two objects
$X$ and $Y$ the Hom space is given by the quotient group ${\rm Hom}_\mathcal{A}(X,
Y)/[\mathcal{F}](X, Y)$ and the composition is induced by the one in
$\mathcal{A}$; compare \cite[p.101]{ARS}. Note that the factor
category $\mathcal{A}/[\mathcal{F}]$ is an additive category.

Denote by $\pi_\mathcal{F}\colon \mathcal{A}\rightarrow
\mathcal{A}/[\mathcal{F}]$ the canonical functor. Denote by
$\mathcal{E}_\mathcal{F}$ the class of composable pairs in
$\mathcal{A}/[\mathcal{F}]$  which are isomorphic to composable
pairs $(\pi_\mathcal{F}(i), \pi_\mathcal{F}(d))$ for $(i, d)\in
\mathcal{E}$.

The case $\mathcal{F}=\mathcal{P}$ is of particular interest, since
the corresponding factor category, known as the \emph{stable
category} of $\mathcal{A}$ and  denoted by
$\underline{\mathcal{A}}$, has a natural triangulated structure. In
this case the canonical functor $\pi_\mathcal{P}\colon
\mathcal{A}\rightarrow \underline{\mathcal{A}}$ sends conflations to
exact triangles. For details, see \cite[Chapter I, Section 2]{Ha1}.

We are interested in the following question: when the factor
category $\mathcal{A}/[\mathcal{F}]$ becomes a Frobenius category
such that its exact structure is given by $\mathcal{E}_\mathcal{F}$?
Note that in general the case $\mathcal{F}=\mathcal{P}$ will not
meet the requirement. The aim of this section is to give a partial
answer to this question.

\vskip 5pt

 Recall that a \emph{pseudo-kernel} of a morphism $f\colon
X\rightarrow Y$ is a morphism $c\colon Y\rightarrow C$ such that
$c\circ f=0$ and it satisfies that any morphism $c'\colon
Y\rightarrow C'$ with $c'\circ f=0$ factors through $c$. Dually one
has the notion of \emph{pseudo-cokernel}; see \cite[Section
2]{AS81}.

Recall that for a subcategory $\mathcal{S}$ of $\mathcal{A}$,  a
morphism $f\colon S \rightarrow X$ is said to be a \emph{right
$\mathcal{S}$-approximation} of $X$ provided that $S\in \mathcal{S}$
and any morphism from an object in $\mathcal{S}$ to $X$ factors
through $f$. Dually one has the notion of \emph{left
$\mathcal{S}$-approximation}; see \cite[Section 1]{AR91}.

\vskip 5pt

Our first result is as follows, which gives sufficient conditions on
when the pair $(\mathcal{A}/[\mathcal{F}], \mathcal{E}_\mathcal{F})$
is a Frobenius category.

\begin{thm}\label{thm:partI}
Let $(\mathcal{A}, \mathcal{E})$ be a Frobenius category and let
$\mathcal{P}$ denote the subcategory of projective objects. Suppose
that $\mathcal{F}\subseteq \mathcal{P}$ satisfies the following
conditions:
\begin{enumerate}
\item any object $A$ in $\mathcal{A}$ fits into a sequence
$$A\stackrel{i_A}\longrightarrow F_A\stackrel{p_A}\longrightarrow
P_A$$ such that $i_A$ is a left $\mathcal{F}$-approximation of $A$,
$P_A\in \mathcal{P}$ and $p_A$ is a pseudo-cokernel of $i_A$;

\item any object $A$ in $\mathcal{A}$ fits into a sequence
$$P^A\stackrel{i^A}\longrightarrow F^A\stackrel{p^A}\longrightarrow
A$$
such that $p^A$ is a right $\mathcal{F}$-approximation of $A$,
$P^A\in \mathcal{P}$ and $i^A$ is a pseudo-kernel of $p^A$.
\end{enumerate}
Then the pair $(\mathcal{A}/[\mathcal{F}], \mathcal{E}_\mathcal{F})$
is a Frobenius category.
\end{thm}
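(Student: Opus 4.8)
The strategy is to verify, in turn, that (a) $\mathcal{E}_\mathcal{F}$ is an exact structure on $\mathcal{A}/[\mathcal{F}]$, (b) the image $\pi_\mathcal{F}(\mathcal{P})$ is a class of projective-injective objects with respect to $\mathcal{E}_\mathcal{F}$, and (c) there are enough of them on both sides. Throughout I will use the conditions (1) and (2) to produce, from an arbitrary conflation in $\mathcal{A}$, a ``corrected'' conflation whose end terms absorb the ambiguity coming from morphisms factoring through $\mathcal{F}$.

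First I would set up the basic reduction: a composable pair in $\mathcal{A}/[\mathcal{F}]$ lying in $\mathcal{E}_\mathcal{F}$ is, up to isomorphism, the image of a genuine conflation $X\stackrel{i}\to Y\stackrel{d}\to Z$ in $\mathcal{A}$. The key preliminary claim is that such an image is a kernel-cokernel pair in the additive category $\mathcal{A}/[\mathcal{F}]$. That $\pi_\mathcal{F}(d)$ is the cokernel of $\pi_\mathcal{F}(i)$ is the easier half: given $g\colon Y\to W$ with $g\circ i$ factoring through some $F\in\mathcal{F}$, one uses condition (1) applied to $X$ — the left $\mathcal{F}$-approximation $i_X\colon X\to F_X$ together with the pseudo-cokernel $p_X\colon F_X\to P_X$ — to split off the part of $g\circ i$ through $\mathcal{F}$ and obtain, after modifying $g$ by something factoring through $\mathcal{F}$, a map vanishing on $i$, hence factoring through $d$; uniqueness modulo $[\mathcal{F}]$ is checked similarly. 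Dually, that $\pi_\mathcal{F}(i)$ is the kernel of $\pi_\mathcal{F}(d)$ uses condition (2). Here I expect a technical subtlety: one must be careful that the pseudo-(co)kernel and approximation hypotheses interact correctly, and in particular that ``$P_A$ projective'' is what lets a pseudo-cokernel of $i_A$ be upgraded, in the stable category, to an actual cokernel — this is where Lemma \ref{lem:lem2} and the behaviour of conflations under pullback/pushout (Lemma \ref{lem:lem1}) will be invoked.

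Next I would check the axioms (Ex0)--(Ex2)$^{\rm op}$ for $\mathcal{E}_\mathcal{F}$. Axiom (Ex0) is immediate since $0$ maps to $0$. For (Ex1), the composition of two deflations in $\mathcal{A}/[\mathcal{F}]$ can be lifted: given $\pi_\mathcal{F}(d_1)$ and $\pi_\mathcal{F}(d_2)$ deflations, one lifts them to deflations $d_1,d_2$ in $\mathcal{A}$ — possibly after composing with a deflation from a projective, so that the lift of the middle object is correct up to a summand in $\mathcal{P}$ — and then $d_1\circ d_2$ is a deflation in $\mathcal{A}$ by (Ex1) there, whose image is isomorphic in $\mathcal{A}/[\mathcal{F}]$ to the given composite. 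For (Ex2) and (Ex2)$^{\rm op}$, I would lift the deflation $\pi_\mathcal{F}(d)\colon Y\to Z$ and the morphism $f\colon Z'\to Z$ to $\mathcal{A}$, form the pullback there using (Ex2), and then argue that applying $\pi_\mathcal{F}$ yields a pullback square in $\mathcal{A}/[\mathcal{F}]$ with $\pi_\mathcal{F}(d')$ a deflation; the needed universal property is extracted from Lemma \ref{lem:lem1} together with the kernel-cokernel statement already established. The main obstacle in this block is showing that a pullback in $\mathcal{A}$ remains a (weak) pullback after passing to the quotient — morphisms factoring through $\mathcal{F}$ could a priori destroy the universal property — and this is exactly where conditions (1) and (2) are used again, to correct candidate maps.

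Finally I would identify the projective-injective objects. I claim $P\in\mathcal{P}$ gives a projective object $\pi_\mathcal{F}(P)$ in $(\mathcal{A}/[\mathcal{F}],\mathcal{E}_\mathcal{F})$: a deflation in $\mathcal{A}/[\mathcal{F}]$ ending at $\pi_\mathcal{F}(P)$ lifts to a deflation in $\mathcal{A}$ ending at $P$ (again up to adjusting by projectives), which splits, so its image splits. Dually $\pi_\mathcal{F}(P)$ is injective. For ``enough projectives'': given $A$, condition (2) provides $P^A\to F^A\stackrel{p^A}\to A$ with $p^A$ a right $\mathcal{F}$-approximation and $i^A$ a pseudo-kernel; combining $F^A\to A$ with a deflation $Q\to A$ from a projective and invoking Lemma \ref{lem:lem2}, one produces a deflation onto $A$ in $\mathcal{A}$ whose source, after passing to $\mathcal{A}/[\mathcal{F}]$, is a projective object — here I expect to need that $F^A$, being an extension-type object built from $P^A\in\mathcal{P}$, becomes projective in the quotient precisely because the ``new'' part $F^A/P^A$ is hit by $\mathcal{F}$-maps and hence killed. ``Enough injectives'' is dual, using condition (1). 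Assembling (a), (b), (c) gives that $(\mathcal{A}/[\mathcal{F}],\mathcal{E}_\mathcal{F})$ is exact with enough projectives and injectives coinciding, i.e.\ Frobenius. I anticipate the single hardest point to be the kernel-cokernel verification of step (a), since everything downstream — the exact axioms and the projectivity statements — rests on having the right ``correction'' lemma that lets one replace an arbitrary morphism by one compatible with a chosen conflation, modulo $[\mathcal{F}]$, using the approximation/pseudo-(co)kernel data.
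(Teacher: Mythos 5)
Your overall skeleton matches the paper's (first show images of conflations are kernel--cokernel pairs, then verify the exact axioms by showing pullbacks survive the passage to the quotient, then exhibit $\mathcal{P}/[\mathcal{F}]$ as the projective-injectives), but the two places where the real work happens are not carried out correctly. First, in the kernel--cokernel step you have the roles of the hypotheses backwards, and more importantly you never address the part of the statement that actually needs them. The manipulation you describe for ``$\pi(d)$ is the cokernel of $\pi(i)$'' (splitting off the $\mathcal{F}$-part of $g\circ i$ and correcting $g$) only establishes the pseudo-cokernel half, and for that one needs no approximation at all: since $F\in\mathcal{F}\subseteq\mathcal{P}$ is injective and $i$ is an inflation, the map $X\to F$ extends along $i$ directly. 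The genuinely delicate points are that $\pi(i)$ is \emph{monic} and $\pi(d)$ is \emph{epic} in $\mathcal{A}/[\mathcal{F}]$. Monicity means: $i\circ a\in[\mathcal{F}](A,Y)$ forces $a\in[\mathcal{F}](A,X)$, and this is proved by applying condition (1) to the \emph{source} $A$ of the test morphism: factor $i\circ a$ through the left approximation $i_A$, push the discrepancy to $P_A$ via the pseudo-cokernel $p_A$, lift it through the deflation $d$ using projectivity of $P_A$, and use $i={\rm Ker}\,d$ together with $p_A\circ i_A=0$. Epicity of $\pi(d)$ is the dual statement and uses condition (2) applied to the \emph{target} of a test morphism out of $Z$. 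Your assignment (condition (1) for the cokernel claim ``applied to $X$'', condition (2) for the kernel claim) cannot be made to work as stated, and without the mono/epi verifications everything downstream (pullbacks remaining pullbacks, identification of projectives) collapses.

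Second, your (Ex1) argument has a gap: you propose to lift the two deflations so that ``the lift of the middle object is correct up to a summand in $\mathcal{P}$.'' This is unjustified---an isomorphism in $\mathcal{A}/[\mathcal{F}]$ need not lift to an isomorphism in $\mathcal{A}$ after adding summands, absent extra hypotheses such as split idempotents---and even if it held it would not suffice, because $\pi_\mathcal{F}$ kills only $\mathcal{F}$, not all of $\mathcal{P}$, so matching the middle objects up to $\mathcal{P}$-summands does not let you compose the two lifted deflations in $\mathcal{A}$. The correct device is exactly the one you invoke for (Ex2): choose a morphism $z$ in $\mathcal{A}$ lifting the connecting isomorphism, form the pullback of the first deflation along $z$, use Lemma \ref{lem:lem1} together with the kernel--cokernel step to see that the square remains a pullback in $\mathcal{A}/[\mathcal{F}]$, deduce that the new vertex is isomorphic in the quotient to the old one, and then compose honest deflations in $\mathcal{A}$. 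Finally, a smaller point: your treatment of ``enough projectives'' is needlessly involved and slightly confused ($F^A$ already lies in $\mathcal{F}\subseteq\mathcal{P}$; there is no ``new part'' to kill). Since conflations of $\mathcal{E}_\mathcal{F}$ are by definition isomorphic to images of conflations of $\mathcal{E}$, the image of any deflation $P\to A$ with $P\in\mathcal{P}$ (respectively, inflation $A\to I$) already does the job; conditions (1) and (2) are needed only in the kernel--cokernel step, not here, and Lemma \ref{lem:lem2} plays no role at this stage.
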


\begin{proof} In the proof, we write $\pi_\mathcal{F}$ as $\pi$. We
will divide the proof into three steps.

\vskip 3pt

 \emph{Step 1.} We will  first show that the composable pairs
in $\mathcal{E}_\mathcal{F}$ are kernel-cokernel pairs. It suffices
to show that for any conflation $X\stackrel{i}\rightarrow
Y\stackrel{d}\rightarrow Z$ in $\mathcal{A}$ we have $\pi(i)={\rm
Ker}\; \pi(d)$ and $\pi(d)={\rm Cok}\; \pi(i)$. We will only show
that $\pi(i)={\rm Ker}\; \pi(d)$, and the remaining equality is shown by a
dual argument.

To show that $\pi(i)$ is mono, it suffices to show that any morphism
$a\colon A \rightarrow X$ in $\mathcal{A}$ having the property
$i\circ a\in [\mathcal{F}](A, Y)$ necessarily lies in
$[\mathcal{F}](A, X)$. Consider the sequence in (1) for $A$. Since
$i\circ a\colon A\rightarrow Y$ factors through an object in
$\mathcal{F}$ and $i_A\colon A\rightarrow F_A$ is a left
$\mathcal{F}$-approximation, there is a morphism $t\colon
F_A\rightarrow Y$ such that $i\circ a=t\circ i_A$. Using that $p_A$
is a pseudo-cokernel of $i_A$, we have a morphism $s\colon
P_A\rightarrow Z$ making the following diagram commute
\[\xymatrix{ A \ar[d]^-a \ar[r]^-{i_A} & F_A \ar@{.>}[d]^{t} \ar[r]^-{p_A} & P_A
\ar@{.>}[d]^-{s}\\
X \ar[r]^-i & Y \ar[r]^-d & Z}\] Since $P_A$ is projective and $(i,
d)$ is a conflation, we may lift $s$ to a morphism $s'\colon
P_A\rightarrow Y$ such that $d\circ s'=s$. Then we have
$$d\circ
(t-s'\circ p_A)=d\circ t-s\circ p_A=0.$$ Since $i={\rm Ker}\;d $,
there exists $a'\colon F_A\rightarrow X$ such that $i\circ
a'=t-s'\circ p_A$. Composing the two sides with $i_A$, we get
$i\circ a'\circ i_A=t\circ i_A=i\circ a$. Note that $i$ is mono and
$F_A\in \mathcal{F}$. Then we have $a=a'\circ i_A$ and it lies in
$[\mathcal{F}](A, X)$.

Having shown that $\pi(i)$ is mono, it suffices to show that
$\pi(i)$ is a pseudo-kernel of $\pi(d)$. Then we have $\pi(i)={\rm
Ker}\; \pi(d)$. For this end, take a morphism $a\colon A\rightarrow
Y$ such that $d\circ a\in [\mathcal{F}](A, Z)$. We will show that
$\pi(a)$ factors through $\pi(i)$. Assume that $d\circ a$ factors as
$A\stackrel{x}\rightarrow F\stackrel{y}\rightarrow Z$ with $F\in
\mathcal{F}$. Since $F$ is projective and $(i, d)$ is a conflation,
we may lift $y$ to a morphism $y'\colon F\rightarrow Y$ such that
$d\circ y'=y$. Then we have
 $$d\circ (a-y'\circ x)=d\circ a-y\circ
x=0.$$
Hence there exists a morphism $a'\colon A\rightarrow X$ such
that $a-y'\circ x=i\circ a'$. Note that $F\in \mathcal{F}$. Applying
$\pi$ we get $\pi(a)=\pi(i)\circ \pi(a')$.

\vskip 3pt

 \emph{Step 2.} We will show next that the pair
$(\mathcal{A}/[\mathcal{F}], \mathcal{E}_\mathcal{F})$ is an exact
category. Note that by definition a morphism $\delta \colon
\pi(Y)\rightarrow \pi(Z)$ is a deflation if and only if there exist
morphisms $a\colon Y\rightarrow Y'$ and $b\colon Z'\rightarrow Z$
such that $\pi(a)$ and $\pi(b)$ are isomorphisms, and a deflation
$d\colon Y'\rightarrow Z'$ in $\mathcal{A}$ such that we have a
\emph{factorization} $\delta=\pi(b)\circ \pi(d)\circ \pi(a)$. The axiom
(Ex0) is trivial.

To show (Ex1), assume that we are given two deflations $\delta\colon
\pi(Y)\rightarrow \pi(Z)$ and $\gamma\colon \pi(Z)\rightarrow
\pi(W)$ in $\mathcal{A}/[\mathcal{F}]$. We may assume that $\delta$
and $\gamma $ factor as $\pi(Y)\stackrel{\pi(a)}\rightarrow
\pi(Y')\stackrel{\pi(d)}\rightarrow
\pi(Z')\stackrel{\pi(b)}\rightarrow \pi(Z)$
 and $\pi(Z)\stackrel{\pi(x)}\rightarrow \pi(Z'')\stackrel{\pi(e)}\rightarrow \pi(W')
 \stackrel{\pi(y)}\rightarrow \pi(W)$, respectively. Here $d\colon Y'\rightarrow Z'$ and
 $e\colon Z''\rightarrow W'$ are deflations in $\mathcal{A}$. Take a
 morphism $z\colon Z''\rightarrow Z$ such that $\pi(z)=(\pi(x)\circ
 \pi(b))^{-1}$. By (Ex2) we have  the pullback diagram in $\mathcal{A}$
 \[\xymatrix{
Y'' \ar@{.>}[r]^-{d'} \ar@{.>}[d]^-{z'} & Z'' \ar[d]^{z}\\
Y'\ar[r]^-{d} & Z'
 }\]
 such that $d'$ is a deflation. By Lemma \ref{lem:lem1} the sequence
$Y''\stackrel{\binom{d'}{-z'}}\rightarrow Z''\oplus Y'\stackrel{(z,
d)}\rightarrow
 Z'$ is a conflation in $\mathcal{A}$. By the first step, applying $\pi$ to this sequence we
 get a kernel-cokernel pair in the factor category $\mathcal{A}/[\mathcal{F}]$.
 In particular, the diagram above is
 still a pullback diagram in $\mathcal{A}/[\mathcal{F}]$. Hence the
 fact that $\pi(z)$ is an isomorphism implies that $\pi(z')$ is also an
 isomorphism. Take a morphism $a'\colon Y'\rightarrow Y''$  in $\mathcal{A}$
 such that $\pi(a')=\pi(z')^{-1}$.
 Then $\gamma \circ \delta$ factors as $$\pi(Y)\stackrel{\pi(a'\circ a)}\longrightarrow \pi(Y'')
 \stackrel{\pi(e\circ d')}\longrightarrow \pi(W')\stackrel{\pi(y)}\longrightarrow
 \pi(W).$$
By (Ex1) $e \circ d'$ is a deflation in $\mathcal{A}$. Observe that
both $\pi(a'\circ a)$ and $\pi(y)$ are isomorphisms. Then we have
that $\gamma\circ \delta$ is a deflation in
$\mathcal{A}/[\mathcal{F}]$, proving the axiom (Ex1). Dually one
shows (Ex1)$^{\rm op}$.

To show (Ex2), take a deflation $\delta\colon \pi(Y)\rightarrow
\pi(Z)$ and a morphism $\pi(f)\colon \pi(Z')\rightarrow \pi(Z)$.
Without loss of generality we may assume that $\delta=\pi(d)$ for a
deflation $d\colon Y\rightarrow Z$ in $\mathcal{A}$. Then we apply
(Ex2) for $\mathcal{A}$ to get a pullback diagram in $\mathcal{A}$.
As above, using Lemma \ref{lem:lem1} and the first step, the
obtained diagram is also a pullback diagram in the factor category
$\mathcal{A}/[\mathcal{F}]$. This proves the axiom (Ex2) for
$\mathcal{A}/[\mathcal{F}]$. Dually one shows (Ex2)$^{\rm op}$.

\vskip 3pt
\emph{Step 3.} The exact category $(\mathcal{A}/[\mathcal{F}],
\mathcal{E}_\mathcal{F})$ is Frobenius. Recall that up to
isomorphism conflations in $\mathcal{A}/[\mathcal{F}]$ are given by the
images of  conflations in $\mathcal{A}$. Then it follows immediately
that objects in $\mathcal{P}/[\mathcal{F}]$ are projective and
injective in the exact category $(\mathcal{A}/[\mathcal{F}],
\mathcal{E}_\mathcal{F})$; moreover, each object $\pi(X)$ in
$\mathcal{A}/[\mathcal{F}]$ admits a deflation $\pi(P)\rightarrow
\pi(X)$ and an inflation $\pi(X)\rightarrow \pi(I)$ with $\pi(P),
\pi(I)\in \mathcal{P}/[\mathcal{F}]$. From these, one concludes
immediately that the exact category $(\mathcal{A}/[\mathcal{F}],
\mathcal{E}_\mathcal{F})$ is Frobenius.
\end{proof}

\begin{rem}
As shown in the third step above, the full subcategory of
$\mathcal{A}/[\mathcal{F}]$ consisting of projective objects is
equal to $\mathcal{P}/[\mathcal{F}]$. Using again the fact that up
to isomorphism conflations in $\mathcal{A}/[\mathcal{F}]$ are given
by the images of  conflations in $\mathcal{A}$, we have an
identification
$\underline{\mathcal{A}}=\underline{\mathcal{A}/[\mathcal{F}]}$ of
triangulated categories. \hfill $\square$
\end{rem}

We will apply Theorem \ref{thm:partI} in two examples.
We begin with our motivating example. We will see that, modulo certain technical results in
\cite{KLM2}, Theorem \ref{thm:partI} specializes to \cite[Theorem
A]{KLM2}.

\begin{exm}\label{exm:KLM}
Let $k$ be a field and $p\geq 2$ be a natural number. Let
$\mathbb{X}$ be the \emph{weighted projective line} of type $(2, 3,
p)$ in the sense of Geigle and Lenzing (\cite{GL}). Denote by ${\rm
coh}\; \mathbb{X}$ the abelian category of coherent sheaves on
$\mathbb{X}$ and by $\mathcal{O}$ the structure sheaf on
$\mathbb{X}$. Denote by $L$ the rank one abelian group on three
generators $\vec{x}_1$, $\vec{x_2}$, $\vec{x}_3$ subject to the
relations $2\vec{x}_1=3\vec{x}_2=p\vec{x}_3$. Recall that the group
$L$ acts on ${\rm coh}\; \mathbb{X}$. We denote  the action of an
element $\vec{x}\in L$ on a sheaf $E$ by $E(\vec{x})$.

Denote by  ${\rm vect}\; \mathbb{X}$ the full subcategory of ${\rm
coh}\; \mathbb{X}$ consisting of vector bundles. Recall that all the
line bundles on $\mathbb{X}$ are given by $\mathcal{O}(\vec{x})$ for
$\vec{x}\in L$; moreover, $\mathcal{O}(\vec{x})\simeq
\mathcal{O}(\vec{y})$ implies that $\vec{x}=\vec{y}$. In other
words, the Picard group of $\mathbb{X}$ is isomorphic to $L$; see
\cite[Proposition 2.1]{GL}.  Recall that the subcategory ${\rm
vect}\; \mathbb{X}\subseteq {\rm coh}\; \mathbb{X}$ is closed under
extensions and then it has a natural exact structure. However with
this exact structure the category  ${\rm vect}\; \mathbb{X}$ is not
Frobenius.

Following \cite{KLM1}  a short exact sequence $\eta\colon
0\rightarrow E'\rightarrow E\rightarrow E''\rightarrow 0 $ of vector bundles is
\emph{distinguished} provided that the sequences ${\rm
Hom}(\mathcal{O}(\vec{x}), \eta)$ are exact for all $\vec{x}\in L$.
By Serre duality this is equivalent to that the sequences ${\rm
Hom}(\eta, \mathcal{O}(\vec{x}))$ are exact for all $\vec{x}\in L$.
Observe that the category ${\rm vect}\; \mathbb{X}$ of vector
bundles is an exact category such that conflations are induced by
distinguished short exact sequences; compare Lemma \ref{lem:lem3}.
We denote by $\mathcal{A}$ this exact category. Moreover, the exact
category $\mathcal{A}$ is Frobenius such that its subcategory
$\mathcal{P}$ of projective objects is equal to the additive closure
of all line bundles. For details, see \cite{KLM1}.

The following terminology is taken from \cite{KLM2}. A line bundle
$\mathcal{O}(\vec{x})$ is said to be \emph{fading} provided that
$\vec{x}\notin \mathbb{Z}\vec{x}_3\cup \vec{x}_2+
\mathbb{Z}\vec{x}_3$. Take $\mathcal{F}\subseteq \mathcal{P}$ to be
the additive closure of these fading line bundles. We claim that the
subcategory $\mathcal{F}$ satisfies the conditions in Theorem
\ref{thm:partI}. Then it follows from Theorem \ref{thm:partI} that
the factor category $\mathcal{A}/[\mathcal{F}]$ inherits the
Frobenius exact structure from the one of $\mathcal{A}$; this is
\cite[Theorem A]{KLM2}.

In fact, the proof of \cite[Proposition 3.13]{KLM2} yields the
following technical fact: for a vector bundle $E$ there is a short
exact sequence  $0\rightarrow  E\stackrel{\alpha}\rightarrow C
\rightarrow P_1\rightarrow 0$ with $C\in \mathcal{F}$ and $P_1\in
\mathcal{P}$; moreover, the morphism $\alpha$ is a left
$\mathcal{F}$-approximation (by \cite[Lemma 3.12 (2)]{KLM2}). Here
we are consistent in notation with the proof of \cite[Proposition
3.13]{KLM2}. Note that one  has a dual version of this result using
the duality $d\colon \mathcal{A}\rightarrow \mathcal{A}$ in the
proof of \cite[Proposition 3.2]{KLM2}. \hfill $\square$
\end{exm}

The second example shows that a certain factor category of the
category of matrix factorizations has a Frobenius exact structure.

\begin{exm}\label{exm:mf}
Let $R$ be a commutative noetherian ring and let $f\in R$ be a
regular element. Recall that a \emph{matrix factorization} of $f$ is
a composable pair $P^0\stackrel{d_P^0} \rightarrow P^1
\stackrel{d_P^1}\rightarrow P^0$ consisting of finitely generated
projective $R$-modules such that $d_P^1\circ d_P^0=f\; {\rm
Id}_{P^0}$ and $d_P^0\circ d_P^1=f\; {\rm Id}_{P^1}$; a morphism
$(f^0, f^1)\colon(d_P^0, d_P^1) \rightarrow (d_Q^0, d_Q^1) $ between
matrix factorizations consists of two morphisms $f^0\colon
P^0\rightarrow Q^0$ and $f^1\colon P^1\rightarrow Q^1$ of
$R$-modules such that $d_Q^0\circ f^0=f^1\circ d_P^0$ and
$d_Q^1\circ f^1=f^0\circ d_P^1$.  Observe that since $f$ is regular,
the two morphisms $d_P^0$ and $d_P^1$ in a matrix factorization are
mono. For details, see \cite[Section 5]{Ei80}.

Denote by ${\rm MF}_R(f)$ the category of matrix factorizations of
$f$. It has a natural exact structure such that a sequence
$(d_{P'}^0, d_{P'}^1)\rightarrow (d_{P}^0, d_{P}^1)\rightarrow
(d_{P''}^0, d_{P''}^1)$ is a conflation if and only if the
corresponding sequences $0\rightarrow P'^i\rightarrow P^i\rightarrow
P''^i\rightarrow 0$ of $R$-modules are short exact, $i=0, 1$.
Moreover, with this exact structure ${\rm MF}_R(f)$ is a Frobenius
category, and its projective objects are equal to direct summands of
an object of the form $({\rm Id}_P, f\; {\rm Id}_P)\oplus (f\; {\rm
Id}_P, {\rm Id}_P)$ for a projective $R$-module $P$; compare
\cite[Chapter I, 3.2]{Ha1} and \cite[Example 5.3]{Ke}.

Denote by $\mathcal{F}$ the full subcategory of ${\rm MF}_R(f)$ consisting of
objects of the form $({\rm Id}_P, f\; {\rm Id}_P)$ for a projective $R$-module $P$. We claim that
$\mathcal{F}$ satisfies the conditions in Theorem \ref{thm:partI}. Indeed, for a matrix
factorization $(d_P^0, d_P^1)$, the following two sequences
$$(d_P^0, d_P^1) \stackrel{(d_P^0, {\rm Id}_{P^1})}\longrightarrow ({\rm Id}_{P^1}, f\; {\rm Id}_{P^1})
\longrightarrow (0, 0)$$
and
$$(0, 0)\rightarrow ({\rm Id}_{P^0}, f\; {\rm Id}_{P^0}) \stackrel{({\rm Id}_{P^0}, d_P^0)}\longrightarrow (d_P^0, d_P^1)$$
are the required sequences in (1) and (2), respectively.  In this way, we get a factor Frobenius
category ${\rm MF}_R(f)/[\mathcal{F}]$.  \hfill $\square$
\end{exm}

\section{Frobenius category and Cohen-Macaulay module}

In this section we will show that any Frobenius category is
equivalent, as exact categories, to an admissible subcategory of the
Frobenius category formed by Cohen-Macaulay modules over an additive
category. This is an analogue of Gabriel-Quillen's embedding theorem
for Frobenius categories; see \cite{Bu10, Ke3}. In particular, our
result suggests that the category of Cohen-Macaulay modules serves
as a standard model for Frobenius categories. We apply the obtained
result to recover a part of \cite[Theorem C]{KLM2}. We also make an application to
the category of matrix factorizations.

\vskip 5pt

Let $\mathcal{C}$ be an additive category. Denote by ${\rm Mod}\;
\mathcal{C}$ the (large)  abelian category of additive contravariant
functors from $\mathcal{C}$ to the category of abelian groups; by
abuse of terminology these functors are called
$\mathcal{C}$-\emph{modules}. Note that exact sequences of
$\mathcal{C}$-modules are given by sequences of functors over
$\mathcal{C}$, which are exact taking values at each object $C\in
\mathcal{C}$.

For an object $C$ in $\mathcal{C}$, denote by $H_C={\rm
Hom}_\mathcal{C}(-, C)$ the corresponding representable functor.
This gives rise to the \emph{Yoneda functor} $H\colon
\mathcal{C}\rightarrow {\rm Mod}\; \mathcal{C}$. Yoneda Lemma says
that there exists a natural isomorphism ${\rm Hom}_{{\rm Mod}\;
\mathcal{C}}(H_C, M)\simeq M(C)$ for each object $C\in \mathcal{C}$
and $M\in {\rm Mod}\; \mathcal{C}$. From these one infers that the
Yoneda functor $H$ is fully faithful and the modules  $H_C$ are
projective for all $C\in \mathcal{C}$. Recall that a
$\mathcal{C}$-module $M$ is \emph{finitely generated} provided that
there exists an epimorphism $H_C\rightarrow M$ for some object $C\in
\mathcal{C}$. Observe that a $\mathcal{C}$-module is finitely
generated projective if and only if  it is a direct summand of $H_C$
for an object $C\in \mathcal{C}$. For details, we refer to
\cite{Mit72}.

Recall that a cochain complex $P^\bullet=(P^n, d^n\colon
P^n\rightarrow P^{n+1})_{n\in \mathbb{Z}}$ consisting of finitely
generated projective $\mathcal{C}$-modules is said to be
\emph{totally acyclic} provided that it is acyclic and for each
object $C$ the Hom complex ${\rm Hom}_{{\rm Mod}\;
\mathcal{C}}(P^\bullet, H_C)$ is acyclic; compare \cite[p.400]{AM}.
Following \cite{Buc87} and \cite{Bel3} a $\mathcal{C}$-module $M$ is
said to be (maximal) \emph{Cohen-Macaulay} provided that there exists a
totally acyclic complex $P^\bullet$ such that the $0$-th cocycle
$Z^0(P^\bullet)$ is isomorphic to $M$. In this case, the complex
$P^\bullet$ is said to be a \emph{complete resolution} of $M$.
Observe that a finitely generated projective $\mathcal{C}$-module
$P$ is Cohen-Macaulay, since we may take its complete resolution as
$\cdots \rightarrow 0\rightarrow P\stackrel{{\rm Id}_P}\rightarrow
P\rightarrow 0\rightarrow \cdots$. Note that in the literature,
Cohen-Macaulay modules are also called \emph{modules of G-dimension
zero} (\cite{ABr69}) and  \emph{Gorenstein-projective modules}
(\cite{EJ}). Let us remark that Cohen-Macaulay modules are closely related
to singularity categories (\cite{Buc87,Or04,Ch10}).

Denote by ${\rm CM}(\mathcal{C})$ the full subcategory of ${\rm
Mod}\; \mathcal{C}$ consisting of Cohen-Macaulay
$\mathcal{C}$-modules. Note that since each Cohen-Macaulay module is
finitely generated, the category ${\rm CM}(\mathcal{C})$  has small
Hom sets. Observe that ${\rm CM}(\mathcal{C})\subseteq {\rm Mod}\;
\mathcal{C}$ is closed under extensions; compare \cite[Propositon
5.1]{AR91}. Then it becomes an exact category such that conflations
are induced by short exact sequences with terms in ${\rm
CM}(\mathcal{C})$.

The following result is well known; compare \cite[Proposition
3.1(1)]{Ch10}. For the definition of an admissible subcategory, see
Section 2.

\begin{lem}\label{lem:lem3.5}
The exact category ${\rm CM}(\mathcal{C})$ is Frobenius; moreover,
its projective objects are equal to finitely generated projective
$\mathcal{C}$-modules. Consequently, any admissible subcategory of
${\rm CM}(\mathcal{C})$ is a Frobenius category.
\end{lem}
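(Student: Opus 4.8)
The plan is to establish the three assertions of Lemma~\ref{lem:lem3.5} in the following order: first, that every finitely generated projective $\mathcal{C}$-module is both projective and injective in the exact category ${\rm CM}(\mathcal{C})$; second, that ${\rm CM}(\mathcal{C})$ has enough projectives and enough injectives, all of which are of this form; and third, the statement about admissible subcategories, which is immediate from the definition recalled in Section~2 once the Frobenius property of ${\rm CM}(\mathcal{C})$ is in hand.

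For the first step, I would argue as follows. Let $P$ be a finitely generated projective $\mathcal{C}$-module, so $P$ is a direct summand of some $H_C$; since $H_C$ is projective in the big abelian category ${\rm Mod}\;\mathcal{C}$ by Yoneda, so is $P$, and hence $P$ is projective in ${\rm CM}(\mathcal{C})$ (any conflation there sits inside a short exact sequence of $\mathcal{C}$-modules, on which ${\rm Hom}(P,-)$ is exact). For injectivity, the key point is that any conflation $0\to M\to N\to L\to 0$ in ${\rm CM}(\mathcal{C})$ with $M$ projective must split: splice complete resolutions of $M$ and $L$ to produce one of $N$; more directly, since $M$ is Cohen-Macaulay it admits a complete resolution, so ${\rm Ext}^1_{{\rm Mod}\;\mathcal{C}}(L,M)$ can be computed via a projective resolution of $L$ and vanishes because $M$ has a left resolution by the \emph{same} finitely generated projectives appearing in the total acyclicity condition — more precisely, one uses that a conflation starting at a projective $P$ is split because $P\hookrightarrow N$ admits a retraction: apply the defining property of total acyclicity of a complete resolution of $L$, dualized, to lift the identity on $P$. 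The cleanest route here is to observe that for $L\in{\rm CM}(\mathcal{C})$ and any finitely generated projective $P$ one has ${\rm Ext}^1_{{\rm Mod}\;\mathcal{C}}(L,P)=0$: indeed $L=Z^0(P^\bullet)$ for a totally acyclic $P^\bullet$, so $L$ is a cokernel of a map between finitely generated projectives, i.e.\ $L$ has projective dimension $\le 1$ relative to the truncation $\cdots\to P^{-2}\to P^{-1}\to L\to 0$, and applying ${\rm Hom}(-,P)$ and using acyclicity of ${\rm Hom}(P^\bullet,H_C)$ (hence of ${\rm Hom}(P^\bullet,P)$ for summands) gives the vanishing. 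This makes every such $P$ injective in ${\rm CM}(\mathcal{C})$.

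For the second step, given $M\in{\rm CM}(\mathcal{C})$ with complete resolution $P^\bullet$, the truncation yields a short exact sequence $0\to Z^1(P^\bullet)\to P^0\to M\to 0$ of $\mathcal{C}$-modules; since $P^\bullet$ is totally acyclic, $Z^1(P^\bullet)$ is again Cohen-Macaulay (shift the complete resolution), so this is a conflation in ${\rm CM}(\mathcal{C})$ realizing a deflation onto $M$ from a finitely generated projective — giving enough projectives. Dually, using the other side of the complete resolution one gets $0\to M\to P^{-1}\to Z^0(\text{shift})\to 0$ — wait, I would instead write $0\to M\to P^0/\ {\rm im}\,? $; concretely, from $P^\bullet$ one has $0\to M\to P^1 \to Z^2(P^\bullet)\to 0$ after the appropriate reindexing, exhibiting an inflation of $M$ into a finitely generated projective with Cohen-Macaulay cokernel, giving enough injectives. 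Combined with the first step, projectives $=$ injectives $=$ finitely generated projective $\mathcal{C}$-modules, so ${\rm CM}(\mathcal{C})$ is Frobenius. The final assertion follows because, as noted in Section~2, an admissible subcategory of a Frobenius category is itself Frobenius, with projectivity detected in the ambient category.

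I expect the main obstacle to be the injectivity claim — establishing cleanly that finitely generated projectives are injective in ${\rm CM}(\mathcal{C})$, equivalently that conflations starting at a projective split, equivalently the vanishing of ${\rm Ext}^1_{{\rm Mod}\;\mathcal{C}}(M,P)$ for $M$ Cohen-Macaulay and $P$ finitely generated projective. The subtlety is that ${\rm Mod}\;\mathcal{C}$-level $\rm Ext$ must be linked correctly to the total acyclicity condition (which only tests against representables $H_C$), and one must be careful that passing to direct summands $P\mid H_C$ preserves the relevant acyclicity; since ${\rm Hom}(P^\bullet,-)$ is additive this is routine but should be spelled out. Everything else — projectivity of $H_C$, closure of ${\rm CM}(\mathcal{C})$ under extensions (already recalled), the existence of the truncation conflations — is standard homological bookkeeping of the kind in \cite{Ch10, Bel3, Buc87}.
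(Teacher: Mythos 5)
Your proposal is correct and takes essentially the same route as the paper: you show finitely generated projective $\mathcal{C}$-modules are both projective and injective in ${\rm CM}(\mathcal{C})$ via the vanishing of ${\rm Ext}^{i}_{{\rm Mod}\,\mathcal{C}}(L,P)$ for $i\geq 1$ (which the paper simply cites from \cite[Lemma 2.1]{CFH06}, while you sketch the direct argument from the truncated complete resolution and total acyclicity against summands of $H_C$), and you then use the two truncation conflations $Z^{-1}(P^\bullet)\to P^{-1}\to M$ and $M\to P^0\to Z^1(P^\bullet)$ to get enough projectives and injectives, exactly as in the paper, with the admissible-subcategory statement following from Section 2. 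The only blemishes are cosmetic: the phrase ``projective dimension $\le 1$'' is a slip (the truncation is in general an infinite projective resolution, which is all your argument uses), and your cocycle indices in the truncation sequences are off, but neither affects the substance.
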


\begin{proof}
Observe first that for a Cohen-Macaulay $\mathcal{C}$-module $M$ and
a finitely generated projective $\mathcal{C}$-module $P$ we have
${\rm Ext}^i_{{\rm Mod}\; \mathcal{C}}(M, P)=0$ for $i\geq 1$;
compare \cite[Lemma 2.1]{CFH06}. Hence the object $P$ is injective
in ${\rm CM}(\mathcal{C})$; while it is clearly projective in ${\rm
CM}(\mathcal{C})$. Observe from the definition that for each
Cohen-Macaulay module $M$ with its complete resolution $P^\bullet$,
we have two conflations $Z^{-1}(P^\bullet)\rightarrow
P^{-1}\rightarrow M$ and $M\rightarrow P^0\rightarrow
Z^1(P^\bullet)$. These two conflations imply that the exact category
${\rm CM}(\mathcal{C})$ has enough projective and enough injective objects;
moreover, from these one infers that the class of projective objects
coincides with the class of injective objects, both of which are
equal to the class of finitely generated projective
$\mathcal{C}$-modules. This shows that the category ${\rm CM}(\mathcal{C})$ is a Frobenius
category, and the last statement follows immediately; see Section 2.
\end{proof}

Let $(\mathcal{A}, \mathcal{E})$ be a Frobenius category. Denote by
$\mathcal{P}$ the full subcategory of its projective objects.
Consider the category ${\rm Mod}\; \mathcal{P}$ of
$\mathcal{P}$-modules. For each object $A$ in $\mathcal{A}$ denote
by $h_A$ the $\mathcal{P}$-module obtained by restricting the
functor $H_A={\rm Hom}_\mathcal{A}(-, A)$ on $\mathcal{P}$. This
yields a functor $h\colon \mathcal{A}\rightarrow {\rm Mod}\;
\mathcal{P}$ sending $A$ to $h_A$; such a functor is known as the
\emph{restricted Yoneda functor}. Observe that for an object $P\in
\mathcal{P}$ we have $h_P=H_P$.

\vskip 5pt

Recall from Lemma \ref{lem:lem3.5} that an admissible subcategory of
the category of Cohen-Macaulay modules is Frobenius. In fact, all
Frobenius categories arise in this way. This is our second result,
which is an analogue of Gabriel-Quillen's embedding theorem for
Frobenius categories; see \cite[Proposition A.2]{Ke3} and
\cite[Theorem A.1]{Bu10}.

\begin{thm}\label{thm:partII}
Use the notation as above. Then the restricted Yoneda functor
$h\colon \mathcal{A}\rightarrow {\rm Mod}\; \mathcal{P}$ induces an
equivalence  of exact categories between $\mathcal{A}$
 and an admissible subcategory of ${\rm CM}(\mathcal{P})$.
 \end{thm}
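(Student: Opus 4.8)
The plan is to show that $h$ is fully faithful, exact and reflects exactness, and that its essential image lands in, and is an admissible subcategory of, $\mathrm{CM}(\mathcal{P})$. I would proceed in the following order.

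\emph{Step 1: $h$ is fully faithful.} For $A,B\in\mathcal{A}$, choose a deflation $P\to A$ with $P\in\mathcal{P}$ and then a deflation $Q\to K$ with $Q\in\mathcal{P}$, where $K$ is the kernel; this gives an exact sequence $Q\to P\to A\to 0$ in $\mathcal{A}$, hence (since every $\mathrm{Hom}_{\mathcal{A}}(-,B)$ is exact on conflations and $\mathcal{P}$ has enough projectives) a presentation $h_Q\to h_P\to h_A\to 0$ of $h_A$ with $h_P=H_P$, $h_Q=H_Q$ projective $\mathcal{P}$-modules. Applying $\mathrm{Hom}_{\mathrm{Mod}\,\mathcal{P}}(-,h_B)$ and using Yoneda ($\mathrm{Hom}(H_P,h_B)\simeq h_B(P)=\mathrm{Hom}_{\mathcal{A}}(P,B)$) identifies $\mathrm{Hom}_{\mathrm{Mod}\,\mathcal{P}}(h_A,h_B)$ with the subgroup of $\mathrm{Hom}_{\mathcal{A}}(P,B)$ killed by $Q\to P$, which by exactness of $\mathrm{Hom}_{\mathcal{A}}(-,B)$ on the conflation $K\to P\to A$ is exactly $\mathrm{Hom}_{\mathcal{A}}(A,B)$; naturality shows this isomorphism is induced by $h$.

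\emph{Step 2: essential image lies in $\mathrm{CM}(\mathcal{P})$.} Splicing projective resolutions on the right of $A$ and injective (= projective) coresolutions on the left, using that $\mathcal{A}$ is Frobenius, produces a long exact sequence $\cdots\to P^{-1}\to P^0\to P^1\to\cdots$ in $\mathcal{A}$ with all $P^n\in\mathcal{P}$ and $A\simeq Z^0$, built from conflations in $\mathcal{A}$. Apply $h$: since the $P^n$ map to the projectives $H_{P^n}$ and $h$ carries conflations to short exact sequences of $\mathcal{P}$-modules (Step 3), we obtain an acyclic complex $H_{P^\bullet}$ of finitely generated projective $\mathcal{P}$-modules with $Z^0\simeq h_A$. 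Total acyclicity is the requirement that $\mathrm{Hom}_{\mathrm{Mod}\,\mathcal{P}}(H_{P^\bullet},H_{P'})$ be acyclic for all $P'\in\mathcal{P}$; by Yoneda this complex is $\mathrm{Hom}_{\mathcal{A}}(P^\bullet,P')$, whose acyclicity follows because each original conflation $X\to P^n\to Y$ has $Y$ with a projective $=$ injective $P^{n+1}$ over it, so $\mathrm{Hom}_{\mathcal{A}}(-,P')$ stays exact (the relevant $\mathrm{Ext}^1_{\mathcal{A}}(Y,P')=0$ since $P'$ is injective). Hence $h_A\in\mathrm{CM}(\mathcal{P})$, and $h_P=H_P$ for $P\in\mathcal{P}$, so $h$ sends projectives to projectives of $\mathrm{CM}(\mathcal{P})$.

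\emph{Step 3: $h$ is exact and reflects exactness; admissibility.} Given a conflation $X\to Y\to Z$ in $\mathcal{A}$, for each $P\in\mathcal{P}$ the sequence $0\to\mathrm{Hom}_{\mathcal{A}}(P,X)\to\mathrm{Hom}_{\mathcal{A}}(P,Y)\to\mathrm{Hom}_{\mathcal{A}}(P,Z)\to 0$ is exact by projectivity of $P$, i.e. $0\to h_X\to h_Y\to h_Z\to 0$ is exact with all terms in $\mathrm{CM}(\mathcal{P})$, so $h$ is exact. Conversely, if $h$ applied to a composable pair $(i,d)$ in $\mathcal{A}$ gives a short exact sequence of $\mathcal{P}$-modules, then evaluating at $P\in\mathcal{P}$ and using that $\mathcal{A}$ has enough projectives one recovers that $(i,d)$ is a conflation (lift along projectives, use Lemma \ref{lem:lem2}); this shows the equivalence of exact categories onto the full image $\mathcal{B}=h(\mathcal{A})$. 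Finally, the conflations $A\to P^0\to Z^1$ and $Z^{-1}\to P^{-1}\to A$ from Step 2 map under $h$ to conflations $h_A\to H_{P^0}\to h_{Z^1}$ and $h_{Z^{-1}}\to H_{P^{-1}}\to h_A$ inside $\mathcal{B}$ with projective middle terms, so $\mathcal{B}$ is admissible in $\mathrm{CM}(\mathcal{P})$; by Lemma \ref{lem:lem3.5} it is Frobenius, and $h$ is the desired equivalence of exact categories.

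\emph{Main obstacle.} The delicate point is verifying \emph{total} acyclicity of $h$ of the complete resolution, i.e. that $\mathrm{Hom}_{\mathrm{Mod}\,\mathcal{P}}(H_{P^\bullet},H_{P'})$ is acyclic; this is where the full strength of ``projective $=$ injective'' in the Frobenius category $\mathcal{A}$ is used, translating via Yoneda into the vanishing of the relevant $\mathrm{Ext}^1_{\mathcal{A}}$ groups against injective objects. A secondary technical care point is checking that the isomorphism constructed in Step 1 is genuinely induced by the functor $h$ rather than merely an abstract isomorphism of Hom-groups — this requires a naturality chase through the presentation.
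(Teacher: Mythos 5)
Your Steps 1--2 and the reflection-of-exactness part of Step 3 follow essentially the same route as the paper (full faithfulness via projective presentations and Yoneda, Cohen--Macaulayness of $h_A$ by splicing conflations and using projective $=$ injective to get total acyclicity, and Lemma \ref{lem:lem2} to recover conflations from short exact sequences of modules). However, there is a genuine gap at the end: you declare the essential image $\mathcal{B}=h(\mathcal{A})$ admissible in ${\rm CM}(\mathcal{P})$ solely because every $h_A$ sits in conflations with projective middle terms. By the definition in Section 2, admissibility first requires $\mathcal{B}$ to be an \emph{extension-closed} exact subcategory of ${\rm CM}(\mathcal{P})$: given a short exact sequence $0\to h_X\to M\to h_Y\to 0$ of Cohen--Macaulay $\mathcal{P}$-modules, one must show $M\simeq h_Z$ for some $Z\in\mathcal{A}$. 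This is not automatic and is exactly Step 3 of the paper's proof: choose a conflation $X\to Q\stackrel{d}\rightarrow X'$ in $\mathcal{A}$ with $Q$ projective, use that $H_Q$ is injective in ${\rm CM}(\mathcal{P})$ (Lemma \ref{lem:lem3.5}) to extend $h_X\to H_Q$ along $h_X\to M$, deduce a conflation $M\to h_Y\oplus H_Q\to h_{X'}$, lift the component $h_Y\to h_{X'}$ to $\mu\colon Y\to X'$ by fullness, and apply Lemma \ref{lem:lem1} to get a conflation $Z\to Y\oplus Q\stackrel{(\mu,d)}\rightarrow X'$ in $\mathcal{A}$ whose image under $h$ identifies $M$ with $h_Z$. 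Without this, the claim that $\mathcal{B}$ is admissible (and hence that the exact structure you transport from $\mathcal{A}$ agrees with the one inherited from ${\rm CM}(\mathcal{P})$) is unproved.

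Two smaller points. In the reflection-of-exactness argument you must also know that $i$ is a kernel of $d$ in $\mathcal{A}$ before invoking Lemma \ref{lem:lem2} (and in order to conclude that $(i,d)$ is a kernel-cokernel pair); this follows from full faithfulness, since $h_i={\rm Ker}\;h_d$, as in the paper's Step 4, but it should be said. Also, the parenthetical ``every ${\rm Hom}_{\mathcal{A}}(-,B)$ is exact on conflations'' in your Step 1 is false as stated (such functors are only left exact unless $B$ is injective); your actual argument uses only left exactness together with projectivity of the objects of $\mathcal{P}$, so this is a wording slip rather than an error.
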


\begin{proof}
We will divide the proof into four steps. First observe that the functor
$h$ sends conflations in $\mathcal{A}$ to short exact sequences of
$\mathcal{P}$-modules, and sends projective objects in $\mathcal{A}$ to
representable functors over $\mathcal{P}$, in particular, finitely
generated projective $\mathcal{P}$-modules.

\vskip 3pt

\emph{Step 1.} We will show that for each object $A\in \mathcal{A}$
the $\mathcal{P}$-module $h_A$ is Cohen-Macaulay. For this, take
conflations $\eta^i\colon A^i\rightarrow P^i \rightarrow A^{i+1}$
such that $A^0=A$ and $P^i$'s are projective for $i\in \mathbb{Z}$.
Applying $h$ to these conflations we get short exact sequences
$0\rightarrow h_{A^i}\rightarrow h_{P^i} \rightarrow
h_{A^{i+1}}\rightarrow 0$. Splicing these short exact sequences we
get an acyclic complex $h_{P^\bullet}$ of finitely generated
projective $\mathcal{P}$-modules which satisfies that
$Z^0(h_{P^\bullet})\simeq h_A$. It remains to show that the complex
$h_{P^\bullet}$ satisfies that for each object $P\in \mathcal{P}$
the Hom complex ${\rm Hom}_{{\rm Mod}\; \mathcal{P}}(h_{P^\bullet},
H_P)$ is acyclic. Here $H_P$ denotes the representable functor
corresponding to $P$. Using Yoneda Lemma this Hom complex is
isomorphic to the Hom complex ${\rm Hom}_\mathcal{A}(P^\bullet, P)$.
Here the complex $P^\bullet$ in $\mathcal{A}$ is constructed by
splicing  the conflations $\eta^i$ together. Then the Hom complex
${\rm Hom}_\mathcal{A}(P^\bullet, P)$ is acyclic, since it is
constructed  by splicing the short exact sequence ${\rm
Hom}_\mathcal{P}(\eta^i, P)$ together; here we use the fact that the
object $P$ is injective in $\mathcal{A}$. Consequently the complex
$h_{P^\bullet}$ is totally acyclic and then the $\mathcal{P}$-module
$h_A$ is Cohen-Macaulay.

\vskip 3pt
 \emph{Step 2.} We will show that the functor $h$ is fully faithful. This
is indeed fairly standard; compare the argument in
\cite[p.102]{ARS}. We will only show the fullness, and by a similar
argument one can show the faithfulness.

For an object $A\in \mathcal{A}$, we obtain from the conflations
$\eta^{-1}$ and $\eta^{-2}$ in the first step a cokernel sequence
$P^{-2}\rightarrow P^{-1}\rightarrow A\rightarrow 0$. This sequence
induces a projective presentation $H_{P^{-2}}\rightarrow
H_{P^{-1}}\rightarrow h_A \rightarrow 0$ of $\mathcal{P}$-modules.
Similarly for another object $A'$ we get a projective presentation
$H_{P'^{-2}}\rightarrow H_{P'^{-1}}\rightarrow h_{A'}\rightarrow 0$.
Given a morphism $\theta\colon h_A\rightarrow h_{A'}$, there exists
a commutative diagram
\[\xymatrix{
H_{P^{-2}}\ar@{.>}[d]^-{\theta^{-2}} \ar[r] & H_{P^{-1}}\ar@{.>}[d]^-{\theta^{-1}}
\ar[r] & h_A \ar[r] \ar[d]^-\theta & 0\\
 H_{P'^{-2}} \ar[r] & H_{P'^{-1}} \ar[r] & h_{A'} \ar[r] & 0
 }\]
Observe that by Yoneda Lemma  there exist morphisms $\mu^{-i}\colon
P^{-i}\rightarrow P'^{-i}$ such that $h_{\mu^{-i}}=\theta^{-i}$;
moreover, these two morphisms make the left side square in the
following diagram  commute.
\[\xymatrix{
P^{-2} \ar[d]^-{\mu^{-2}} \ar[r] & P^{-1} \ar[d]^-{\mu^{-1}}
\ar[r] & A \ar[r] \ar@{.>}[d]^-\mu & 0\\
 P'^{-2} \ar[r] & P'^{-1} \ar[r] & A \ar[r] & 0
 }\]
Since the two rows in the diagram above are cokernel sequences, one
infers that there exists $\mu\colon A \rightarrow A'$ making the
diagram commute. It is direct to see that $h_\mu=\theta$ and this
proves that the functor $h$ is full.

\vskip 3pt

 \emph{Step 3.} Denote by ${\rm Im}\; h$ the \emph{essential
image} of the functor $h$. We have shown that ${\rm Im}\; h\subseteq
{\rm CM}(\mathcal{P})$. We will now show that it is
extension-closed. Note that the functor $h$ sends projective objects
to projective modules, and sends conflations to short exact
sequences. This will imply that ${\rm Im}\; h$ is an admissible
subcategory of ${\rm CM}(\mathcal{P})$; see Section 2.

Take a conflation $h_X \rightarrow M \rightarrow h_Y$ in ${\rm
CM}(\mathcal{P})$ with $X, Y\in \mathcal{A}$. We will show that $M$
lies in ${\rm Im}\; h$. For this, take a conflation $X\rightarrow Q
\stackrel{d}\rightarrow X'$ with $Q$ projective. Then we have the
following commutative exact diagram
\[\xymatrix{
h_X \ar[r] \ar@{=}[d] & M \ar[r] \ar@{.>}[d] & h_Y
\ar@{.>}[d]^-{\theta}\\
h_X \ar[r] & H_Q \ar[r]^{h_d} & h_{X'}}\] Here we use that $h_Q=H_Q$
 is injective in ${\rm CM}(\mathcal{P})$; see Lemma \ref{lem:lem3.5}. From this diagram we have
 a conflation $M \rightarrow h_Y\oplus H_Q \stackrel{(\theta, h_d)}\rightarrow h_X'$
 in ${\rm CM}(\mathcal{P})$.
  By the second step there
 exists a morphism $\mu\colon Y\rightarrow X'$ such that
 $h_\mu=\theta$.
 By Lemma \ref{lem:lem1} there exists a conflation
 $Z\rightarrow Y\oplus Q\stackrel{(\mu, d)}\rightarrow X'$
 in $\mathcal{A}$ for some object $Z$. Applying $h$ to this conflation  we get an isomorphism $M\simeq h_Z$.

\vskip 3pt

 \emph{Step 4.} We will show that the functor $h$ induces an
 equivalence of exact categories between $\mathcal{A}$ and ${\rm Im}\;
 h$. Then we are done with the proof. What remains to show is that the functor $h$
 \emph{reflects exactness}, that is, any sequence $\eta\colon X\stackrel{i}\rightarrow Y \stackrel{d}\rightarrow Z$
in $\mathcal{A}$ is a conflation provided that $h_\eta$ is a
conflation in ${\rm Im}\; h$. View the functor $h$ as a full
embedding. Since $h_i={\rm Ker}\; h_d$, we have $i={\rm Ker}\; d$.
For each projective object $P$, we have an isomorphism ${\rm
Hom}_\mathcal{A}(P, \eta)\simeq {\rm Hom}_{{\rm
CM}(\mathcal{P})}(H_P, h_\eta)$ of sequences, and hence they are
both exact. In particular, for a chosen deflation
$P\stackrel{d'}\rightarrow Z$ with $P$ projective, there exists a
morphism $t\colon P\rightarrow Y$ such that $d\circ t=d'$. Now we
apply Lemma \ref{lem:lem2} to the morphism $d$. Then $d$ is a
deflation and as its kernel, $i$ is an inflation. Consequently, the
sequence $\eta$ is a conflation in $\mathcal{A}$, completing the
proof.
\end{proof}

We will apply Theorem \ref{thm:partII} to recover a  part of
\cite[Theorem C]{KLM2}, which gives a surprising link between the
category of vector bundles on weighted projective lines and a
certain submodule category.

\begin{exm}\label{exm:KLMcontinued}
Consider the factor  Frobenius category
$\mathcal{A'}={\rm vect}\; \mathbb{X}/[\mathcal{F}]$ in Example \ref{exm:KLM}.
The full subcategory $\mathcal{P}'$ consisting of projective objects
is equal to $\mathcal{P}/[\mathcal{F}]$. Then Theorem
\ref{thm:partII} implies that the associated restricted Yoneda
functor $h\colon \mathcal{A}'\rightarrow {\rm Mod}\; \mathcal{P}'$
induces an equivalence of exact categories between $\mathcal{A}'$
with an admissible subcategory of ${\rm CM}(\mathcal{P}')$. This
might be viewed as a part of \cite[Theorem C]{KLM2}.

In this situation, a highly nontrivial result is that the corresponding
admissible subcategory is ${\rm CM}(\mathcal{P}')$ itself; compare \cite[Propositon
3.18]{KLM2}. Finally observe that the category ${\rm
CM}(\mathcal{P}')$ of Cohen-Macaulay $\mathcal{P}'$-modules is equal to the
submodule category ${\mathcal{S}(\widetilde{p})}$ of Ringel-Schmidmeier
(by combining \cite[Lemma B]{KLM2} and a graded version of \cite[Lemma 4.3]{Ch09}). From
these we conclude that there is an equivalence of exact categories between 
the factor category ${\rm vect}\; \mathbb{X}/[\mathcal{F}]$ and the category 
$\mathcal{S}(\widetilde{p})$ of Ringel-Schmidmeier; 
this is  \cite[Theorem C]{KLM2}. \hfill $\square$
\end{exm}

In the next example, we apply Theorem \ref{thm:partII}  to the factor Frobenius
category obtained in Example \ref{exm:mf}.

\begin{exm}\label{exm:mfcontinued}
Let $R$ be a commutative noetherian ring and let $f\in R$ be a
regular element. We consider the factor Frobenius category $\mathcal{A}={\rm MF}_R(f)/[\mathcal{F}]$ in
Example \ref{exm:mf}.  Observe that its full subcategory $\mathcal{P}$ of projective objects
is the additive closure of the object $T:=(f{\rm Id}_R, {\rm Id}_R)$; moreover,
the endomorphism ring of $T$ (in $\mathcal{A}$) is isomorphic to the quotient ring $S:=R/(f)$.
By a version of Morita equivalence we have an equivalence ${\rm Mod}\; \mathcal{P}\simeq \mbox{Mod}\; S$
of module categories; here $\mbox{Mod}\; S$ denotes the category of $S$-modules. Furthermore, this
equivalence restricts to an equivalence ${\rm CM}(\mathcal{P})\simeq {\rm CM}(S)$. Here, ${\rm CM}(S)$ is
the category of (maximal) Cohen-Macaulay $S$-modules (\cite{Bu10, Bel3}).
Together with this equivalence we apply Theorem \ref{thm:partII} to $\mathcal{A}$.
Then the restricted Yoneda functor
$$h\colon {\rm MF}_R(f)/[\mathcal{F}] \longrightarrow {\rm CM}(S)$$
 identifies ${\rm MF}_R(f)/[\mathcal{F}]$  as an admissible subcategory of ${\rm CM}(S)$.  We will
 describe this admissible subcategory of ${\rm CM}(S)$. For this end,
 we will first give another description of the functor $h$.
 \vskip 3pt

 Consider the following functor
$${\rm Cok}\colon {\rm MF}_R(f)\longrightarrow {\rm CM}(S)$$
which sends a matrix factorization $(d_P^0, d_P^1)$ to ${\rm Cok}\;
d_P^1$ and which acts on morphisms naturally. Observe that ${\rm
Cok}\; d_P^1$ is indeed a Cohen-Macaulay $S$-module; compare \cite[Proposition 5.1]{Ei80}.
 Note that the functor ${\rm Cok}$ is exact and vanishes on $\mathcal{F}$. Then we have an induced
 functor ${\rm Cok}\colon {\rm MF}_R(f)/[\mathcal{F}]\rightarrow {\rm CM}(S)$. We claim that
 there is a natural isomorphism between $h$ and ${\rm Cok}$. In fact, to see this isomorphism,
 it suffices to note the natural isomorphisms ${\rm Hom}_\mathcal{A}(T, (d_P^0, d_P^1))\simeq {\rm Cok}\; d_P^1$ for
 all matrix factorizations $(d_P^0, d_P^1)$.

Denote by $\mathcal{B}$ the full subcategory of ${\rm CM}(S)$ consisting of modules which, when viewed as $R$-modules, have
projective dimension at most one. Observe that $\mathcal{B}$ is an extension-closed exact subcategory of
${\rm CM}(S)$. Recall that in a matrix factorization $(d_P^0, d_P^1)$ both morphisms $d_P^0$ and
$d_P^1$ are mono. It follows that the image of the functor {\rm Cok} lies in  $\mathcal{B}$.
We claim that any module in $\mathcal{B}$ lies in the image of the functor {\rm Cok}.
 To see this, for an $S$-module $M$ in $\mathcal{B}$ we take an exact sequence
 $0\rightarrow P^1\stackrel{d_P^1}\rightarrow P^0\stackrel{\pi}\rightarrow M\rightarrow 0$ such that
 $P^i$ are finitely generated projective $R$-modules, $i=0,1$. Since $f$ vanishes on $M$, then $\pi\circ f{\rm Id}_{P^0}=0$ and then $f{\rm Id}_{P^0}$ factors uniquely
 through $d_P^1$. In this way, we obtain a morphism $d_P^0\colon P^0\rightarrow P^1$ such that
 $(d_P^0, d_P^1)$ is a matrix factorization. Observe that $M\simeq {\rm Cok}\; d_P^1$. This shows the claim.
 Recall that the two functors $h$ and ${\rm Cok}$ are isomorphic. Then we conclude that
 the essential image of $h$ is $\mathcal{B}$.
 In particular, the subcategory $\mathcal{B}\subseteq {\rm CM}(S)$ is admissible.  Hence the restricted Yoneda
 functor induces an equivalence of exact categories ${\rm MF}_R(f)/[\mathcal{F}]\simeq \mathcal{B}$. One might
 compare this with   \cite[Corollary 6.3]{Ei80} and \cite[Theorem 3.9]{Or04}.

\vskip 3pt

The situation is particularly nice if we assume that the ring $R$ is \emph{regular}, that is, $R$ has finite global
dimension. In this case, the quotient ring $S$ is Gorenstein.
Observe that each Cohen-Macaulay $S$-module has projective
dimension at most one, when viewed as an $R$-module (by \cite[Lemma
18.2(i)]{Mat}), that is, $\mathcal{B}={\rm CM}(S)$. Then the restricted
Yoneda functor $h$ induces an equivalence of exact categories ${\rm
MF}_R(f)/[\mathcal{F}]\simeq {\rm CM}(S).$
\hfill  $\square$
\end{exm}

We introduce the following notion: a Frobenius category
$\mathcal{A}$ is \emph{standard} provided that the associated
restricted Yoneda functor $h\colon \mathcal{A}\rightarrow {\rm
CM}(\mathcal{P})$ is an equivalence of exact categories; this is
equivalent by Theorem \ref{thm:partII} to that the functor $h$ is
dense. For example, one can show that a Frobenius abelian category
is standard; Example
\ref{exm:KLMcontinued} claims that the factor Frobenius category
$\mathcal{A}'$ is standard; Example \ref{exm:mfcontinued} implies that for a regular ring
$R$ and a regular element $f\in R$, the factor Frobenius category
${\rm MF}_R(f)/[\mathcal{F}]$ is standard.

In general, it would be very nice to have an intrinsic criterion on
when a Frobenius category is standard.

\section{Frobenius category from exact category}

In this section we give sufficient conditions such that on an exact
category with enough projective and enough injective objects there
exists another natural exact structure, with which the given
category becomes Frobenius. We apply the result to the morphism
category of a Frobenius abelian category; it turns out that this
morphism category has a natural Frobenius exact structure. This
observation allows us to interpret the minimal monomorphism
operation in \cite{RS06} as a triangle functor, which is right
adjoint to an inclusion triangle functor.

\vskip 5pt

Let $(\mathcal{A}, \mathcal{E})$ be an exact category with enough
projective and enough injective objects. We denote by $\mathcal{P}$
and $\mathcal{I}$ the full subcategory of $\mathcal{A}$ consisting
of projective and injective objects, respectively. Note that the
exact category $\mathcal{A}$ might not be Frobenius. The aim is to
show that under certain conditions there is a new exact structure
$\mathcal{E}'$ on $\mathcal{A}$ such that $(\mathcal{A},
\mathcal{E}')$ is a Frobenius category.

Recall that a full additive subcategory $\mathcal{S}$ of
$\mathcal{A}$ is said to be \emph{contravariantly finite} provided
that each object in $\mathcal{A}$ has a right
$\mathcal{S}$-approximation. Dually one has the notion of
\emph{covariantly finite subcategory} (\cite[Section 2]{AS81}). For
two full subcategories $\mathcal{X}$ and $\mathcal{Y}$ of
$\mathcal{A}$, denote by $\mathcal{X}\vee \mathcal{Y}$ the smallest
full additive subcategory of $\mathcal{A}$ which contains
$\mathcal{X}$ and $\mathcal{Y}$ and is closed under taking direct
summands.

Recall that for a full additive subcategory $\mathcal{S}$ of an exact category $\mathcal{A}$,
 a conflation $\eta\colon X\rightarrow Y\rightarrow Z$ is \emph{right
$\mathcal{S}$-acyclic} provided that the sequences ${\rm
Hom}_\mathcal{A}(S, \eta)$ are short exact for all $S\in
\mathcal{S}$. Dually one has the notion of \emph{left
$\mathcal{S}$-acyclic conflation}.

\vskip 5pt

 Here is our third result, which gives sufficient
conditions such that there is a natural (and new) exact structure on
$\mathcal{A}$,
 with which $\mathcal{A}$ becomes a Frobenius category.

\begin{thm}\label{thm:partIII}
Use the notation as above. Assume that $\mathcal{P}'\subseteq
\mathcal{P}$ and $\mathcal{I}'\subseteq \mathcal{I}$ are two full
additive subcategories subject to the following conditions:
\begin{enumerate}
\item $\mathcal{P}'\vee \mathcal{I}=\mathcal{I}'\vee \mathcal{P}$;
\item $\mathcal{P}'\subseteq \mathcal{A}$ is covariantly finite and
$\mathcal{I}'\subseteq \mathcal{A}$ is contravariantly finite;
\item the class of right $\mathcal{I}'$-acyclic conflations
coincides with the class of left $\mathcal{P}'$-acyclic conflations.
\end{enumerate}
Denote the class of conflations in (3) by $\mathcal{E}'$. Then the
pair $(\mathcal{A}, \mathcal{E}')$ is a Frobenius exact category.
\end{thm}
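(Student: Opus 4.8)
The plan is to verify first that $\mathcal{E}'$ is an exact structure on $\mathcal{A}$, and then to pin down its projective and injective objects. Since $\mathcal{E}'\subseteq\mathcal{E}$, every member of $\mathcal{E}'$ is a kernel-cokernel pair and $\mathcal{E}'$ is closed under isomorphism. The useful reformulation is that, for a conflation $\eta\colon X\stackrel{i}{\rightarrow}Y\stackrel{d}{\rightarrow}Z$, being right $\mathcal{I}'$-acyclic means exactly that every morphism $S\rightarrow Z$ with $S\in\mathcal{I}'$ lifts along $d$ (left exactness of $\mathrm{Hom}(S,-)$ being automatic), and dually being left $\mathcal{P}'$-acyclic means every morphism $X\rightarrow S$ with $S\in\mathcal{P}'$ extends along $i$. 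Using condition (3), which identifies these two classes, I would check Keller's axioms: (Ex0) is trivial; for (Ex1), a composite $Y\stackrel{d}{\rightarrow}Z\stackrel{e}{\rightarrow}W$ of $\mathcal{E}'$-deflations is an $\mathcal{E}$-deflation, and a two-step lifting (lift $S\rightarrow W$ first along $e$, then along $d$) shows it is right $\mathcal{I}'$-acyclic; for (Ex2), form the $\mathcal{E}$-pullback of an $\mathcal{E}'$-deflation $d$ along $f$, note by Lemma \ref{lem:lem1} that the resulting deflation $d'$ sits in a conflation with the same kernel, and chase the pullback square to transport the lifting property from $d$ to $d'$. The axiom (Ex2)$^{\mathrm{op}}$ is dual, now using the left $\mathcal{P}'$-acyclic description together with $\mathcal{E}$-pushouts and the dual of Lemma \ref{lem:lem1}, and (Ex1)$^{\mathrm{op}}$ follows formally. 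Hence $(\mathcal{A},\mathcal{E}')$ is exact.

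Next I would set $\mathcal{Q}:=\mathcal{P}'\vee\mathcal{I}=\mathcal{I}'\vee\mathcal{P}$, the equality being condition (1), and argue that $\mathcal{Q}$ consists precisely of the $\mathcal{E}'$-projective objects, and also precisely of the $\mathcal{E}'$-injective objects. The inclusion of $\mathcal{Q}$ into both classes is easy: objects of $\mathcal{P}$ stay $\mathcal{E}'$-projective because $\mathcal{E}'\subseteq\mathcal{E}$; an object of $\mathcal{I}'$ is $\mathcal{E}'$-projective because any $\mathcal{E}'$-deflation onto it is right $\mathcal{I}'$-acyclic, so its identity lifts and the deflation splits; and $\mathcal{E}'$-projectivity is inherited by direct summands. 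Writing $\mathcal{Q}=\mathcal{P}\vee\mathcal{I}'$ then shows every object of $\mathcal{Q}$ is $\mathcal{E}'$-projective, and dually, writing $\mathcal{Q}=\mathcal{I}\vee\mathcal{P}'$, that every object of $\mathcal{Q}$ is $\mathcal{E}'$-injective.

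For the converse inclusions I would show that $(\mathcal{A},\mathcal{E}')$ has enough projectives and enough injectives, taken from $\mathcal{Q}$. Given $A$, choose an $\mathcal{E}$-deflation $p\colon P\rightarrow A$ with $P\in\mathcal{P}$ and a right $\mathcal{I}'$-approximation $g\colon I'\rightarrow A$ with $I'\in\mathcal{I}'$ (condition (2)). Then $(p,g)\colon P\oplus I'\rightarrow A$ is again an $\mathcal{E}$-deflation (adjoining a morphism to a deflation keeps it a deflation), and because $g$ is an $\mathcal{I}'$-approximation every morphism from an object of $\mathcal{I}'$ to $A$ factors through it; so $(p,g)$ is right $\mathcal{I}'$-acyclic, i.e.\ an $\mathcal{E}'$-deflation, with $P\oplus I'\in\mathcal{Q}$. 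Dually, combining an $\mathcal{E}$-inflation $A\rightarrow I$ into $\mathcal{I}$ with a left $\mathcal{P}'$-approximation $A\rightarrow P'$ yields an $\mathcal{E}'$-inflation $A\rightarrow I\oplus P'$ with $I\oplus P'\in\mathcal{Q}$. Now if $P$ is $\mathcal{E}'$-projective, the $\mathcal{E}'$-deflation $Q\rightarrow P$ with $Q\in\mathcal{Q}$ splits, so $P$ is a summand of $Q$ and hence lies in $\mathcal{Q}$; dually for $\mathcal{E}'$-injectives. Thus the $\mathcal{E}'$-projectives and the $\mathcal{E}'$-injectives both coincide with $\mathcal{Q}$, and $(\mathcal{A},\mathcal{E}')$ is a Frobenius category.

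I do not expect a genuinely hard step; the care lies in checking that the $\mathcal{E}$-pullbacks and $\mathcal{E}$-pushouts produced for (Ex2) and (Ex2)$^{\mathrm{op}}$ really remain in $\mathcal{E}'$, where one must combine the lifting/extension reformulation of acyclicity with Lemma \ref{lem:lem1} and its dual to keep track of the kernels and cokernels. Conceptually, the two hypotheses deserve emphasis: condition (3) is exactly what permits switching freely between the ``deflation side'' (right $\mathcal{I}'$-acyclic) and the ``inflation side'' (left $\mathcal{P}'$-acyclic), so that all of the exact-category axioms hold simultaneously, while condition (1) is precisely what forces the projective and injective objects of the new structure to coincide.
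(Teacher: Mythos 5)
Your proposal is correct and follows essentially the same route as the paper: verify Keller's axioms for $\mathcal{E}'$ via the lifting/extension reformulation of acyclicity (the paper isolates this as Lemma \ref{lem:lem3}, proved for right $\mathcal{S}$-acyclic conflations of an arbitrary subcategory $\mathcal{S}$, so it never needs condition (3) at that stage, whereas you invoke (3) for (Ex2)$^{\rm op}$ --- a harmless organizational difference), and then identify both the $\mathcal{E}'$-projectives and the $\mathcal{E}'$-injectives with $\mathcal{P}'\vee\mathcal{I}=\mathcal{I}'\vee\mathcal{P}$ by producing the deflation $(d,s)\colon P\oplus I'\rightarrow Z$ from a projective deflation and a right $\mathcal{I}'$-approximation (and dually), exactly as in the paper. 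No gaps.
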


The proof of this result is quite direct, once we notice the
following general observation.

\begin{lem}\label{lem:lem3}
Let $(\mathcal{A}, \mathcal{E})$ be an exact category. For a full
additive  subcategory $\mathcal{S}\subseteq \mathcal{A}$, denote by
$\mathcal{E}'$ the class of right $\mathcal{S}$-acyclic conflations.
Then the pair $(\mathcal{A}, \mathcal{E}')$ is an exact category.
\end{lem}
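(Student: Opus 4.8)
The plan is to verify the exact category axioms (Ex0), (Ex1), (Ex2) and their duals for the pair $(\mathcal{A}, \mathcal{E}')$, using that $(\mathcal{A}, \mathcal{E})$ already satisfies them and that $\mathcal{E}'$ is simply the subclass of $\mathcal{E}$ consisting of those conflations $\eta$ for which ${\rm Hom}_\mathcal{A}(S, \eta)$ is exact for all $S\in\mathcal{S}$. First I would observe that every composable pair in $\mathcal{E}'$ is a kernel-cokernel pair, since it is already one in $\mathcal{E}$; and that $\mathcal{E}'$ is closed under isomorphisms because $\mathcal{E}$ is and the defining exactness condition is invariant under isomorphism of composable pairs. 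The axiom (Ex0) is immediate: the split conflation on the zero object is right $\mathcal{S}$-acyclic because ${\rm Hom}_\mathcal{A}(S,-)$ preserves split exact sequences.

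For (Ex1), given two deflations $d\colon Y\to Z$ and $d'\colon Z\to W$ lying in $\mathcal{E}'$-conflations, their composite $d'\circ d$ is a deflation in $\mathcal{E}$; I would then check right $\mathcal{S}$-acyclicity of the resulting conflation $\ker(d'd)\to Y\to W$. Applying ${\rm Hom}_\mathcal{A}(S,-)$, surjectivity of ${\rm Hom}(S,Y)\to{\rm Hom}(S,W)$ follows by composing the surjections ${\rm Hom}(S,Y)\to{\rm Hom}(S,Z)$ and ${\rm Hom}(S,Z)\to{\rm Hom}(S,W)$, while exactness in the middle needs a small diagram chase combining the two short exact sequences of Hom-groups; here one uses that $\ker d$ sits inside $\ker(d'd)$ and the snake-type argument on the two conflations. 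For (Ex2), given a deflation $d\colon Y\to Z$ in an $\mathcal{E}'$-conflation and a morphism $f\colon Z'\to Z$, I would form the pullback square in $(\mathcal{A},\mathcal{E})$ so that $d'\colon Y'\to Z'$ is an $\mathcal{E}$-deflation, then invoke Lemma \ref{lem:lem1} to get the conflation $Y'\to Z'\oplus Y\to Z$; applying ${\rm Hom}_\mathcal{A}(S,-)$ and using that the original conflation through $d$ is right $\mathcal{S}$-acyclic, one sees the new conflation $\ker d'\to Y'\to Z'$ is again right $\mathcal{S}$-acyclic (the kernel of $d'$ is identified with the kernel of $d$ via the pullback, and surjectivity of ${\rm Hom}(S,Y')\to{\rm Hom}(S,Z')$ follows from a diagram chase using the surjectivity of ${\rm Hom}(S,Y)\to{\rm Hom}(S,Z)$). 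The dual axioms (Ex1)$^{\rm op}$ and (Ex2)$^{\rm op}$ are handled symmetrically, or (Ex1)$^{\rm op}$ can simply be omitted per the remark after the axiom list in Section 2.

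I expect the main obstacle to be the verification of (Ex2): one must confirm that the pullback diagram constructed in $(\mathcal{A},\mathcal{E})$ genuinely produces a right $\mathcal{S}$-acyclic conflation, which amounts to checking that the functor ${\rm Hom}_\mathcal{A}(S,-)$ applied to the pullback square still yields a pullback of abelian groups with the relevant surjectivity — this is exactly where Lemma \ref{lem:lem1} does the work, converting the pullback into a genuine conflation to which right $\mathcal{S}$-acyclicity of the given deflation can be transported. Once these axioms are in place, $(\mathcal{A},\mathcal{E}')$ is an exact category by definition, completing the proof. I would keep the exposition brief, citing Lemma \ref{lem:lem1} for the pullback/pushout conversions and leaving the elementary Hom-group diagram chases to the reader.
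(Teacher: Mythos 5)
Your verification of (Ex0), (Ex1) and (Ex2) is essentially the paper's argument: the pullback in (Ex2) is handled exactly as in the paper, by using Lemma \ref{lem:lem1} (equivalently, the left exactness of ${\rm Hom}_\mathcal{A}(S,-)$ on the conflation $Y'\to Z'\oplus Y\to Z$) to lift a morphism $S\to Z'$ through $d'$. One small simplification you miss: since ${\rm Hom}_\mathcal{A}(S,-)$ applied to any kernel--cokernel pair is automatically left exact, right $\mathcal{S}$-acyclicity of a conflation $(i,d)$ is \emph{equivalent} to the statement that every morphism from an object of $\mathcal{S}$ to the end term factors through $d$; with this observation your ``snake-type'' chase for exactness in the middle in (Ex1) is superfluous, and (Ex1) is just composition of surjections, which is how the paper phrases it.

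There is, however, a genuine gap at (Ex2)$^{\rm op}$. You dismiss it as ``handled symmetrically,'' but the class $\mathcal{E}'$ is defined by a one-sided, covariant Hom-condition: under passage to the opposite category, right $\mathcal{S}$-acyclic conflations become \emph{left} $\mathcal{S}$-acyclic ones, so the dual of your (Ex2) argument proves the axiom for the wrong class and no formal duality applies. A separate argument is needed, and this is exactly the one nontrivial point of the paper's proof: given an $\mathcal{E}'$-inflation $i\colon X\to Y$ with associated conflation $X\stackrel{i}\to Y\stackrel{d}\to Z$ and a morphism $f\colon X\to X'$, form the pushout in $(\mathcal{A},\mathcal{E})$ and apply the dual of Lemma \ref{lem:lem1} to obtain a conflation $X'\stackrel{i'}\to Y'\stackrel{d'}\to Z$ with $d=d'\circ f'$; then any morphism $S\to Z$ with $S\in\mathcal{S}$ factors through $d$, hence through $d'$, so the pushout conflation is again right $\mathcal{S}$-acyclic and $i'$ is an $\mathcal{E}'$-inflation. (Note that (Ex1)$^{\rm op}$ may indeed be omitted, as you say, but (Ex2)$^{\rm op}$ may not.) With this supplied, your proof is complete and coincides with the paper's.
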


\begin{proof}
For a conflation $(i, d)$ in $\mathcal{E}'$, we will temporarily
call  $i$  an $\mathcal{E}'$-inflation and $d$  an
$\mathcal{E}'$-deflation. We verify the axioms for the pair
$(\mathcal{A}, \mathcal{E}')$. Recall that the axiom (Ex1)$^{\rm
op}$ can be deduced from the others; see \cite[Appendix A]{Ke3}. So
we only show the remaining four axioms. The axiom (Ex0) is clear.
Recall that a deflation $d\colon Y\rightarrow Z$ is an
$\mathcal{E}'$-deflation if and only if every morphism from an
object in $\mathcal{S}$ to $Z$ factors through $d$. This observation
yields (Ex1) immediately.

Consider the pullback diagram in the axiom (Ex2); see Section 2.
Assume that $d\colon Y\rightarrow Z$ is an $\mathcal{E}'$-deflation.
We will show that $d'\colon Y'\rightarrow Z'$ is also an
$\mathcal{E}'$-deflation. Take a morphism $s\colon S\rightarrow Z'$
with $S\in \mathcal{S}$. Since $d$ is an $\mathcal{E}'$-deflation,
the morphism $f\circ s$ lifts to $Y$, that is, there exists
$s'\colon S\rightarrow Y$ such that $d\circ s'=f\circ s$. Using the
universal property of the pullback diagram, we infer that  there
exists a unique morphism $t\colon S\rightarrow Y'$ such that $d'
\circ t=s$ and $f' \circ t=s'$. In particular, the morphism $s$
factors through $d'$, proving that $d'$ is an
$\mathcal{E}'$-deflation.

It remains to verify (Ex2)$^{\rm op}$. Consider the pushout diagram
in (Ex2)$^{\rm op}$; see Section 2. We assume that $i\colon
X\rightarrow Y$ is an $\mathcal{E}'$-inflation.  We will show that
$i'$ is  an $\mathcal{E}'$-inflation. We apply the dual of Lemma
\ref{lem:lem1} to get the following commutative diagram such that
the two rows are conflations
\[\xymatrix{  X \ar[r]^-{i} \ar[d]^-{f} & Y \ar[d]^-{f'} \ar[r]^-d & Z \ar@{=}[d] \\
   X'  \ar[r]^-{i'} & Y' \ar[r]^-{d'} & Z }\]
Here $d=d'\circ f'$. The fact that $i$ is an
$\mathcal{E}'$-inflation implies that $d$ is an
$\mathcal{E}'$-deflation. Consider any morphism $s\colon
S\rightarrow Z$ with $S\in \mathcal{S}$. Then $s$ factors through
$d$. Since $d=d'\circ f'$, we infer that the morphism $s$ factors
through $d'$. This shows that $d'\colon Y'\rightarrow Z$ is an
$\mathcal{E}'$-deflations and then $i'$ is an
$\mathcal{E}'$-inflation. We are done.
\end{proof}

\vskip 10pt

\noindent {\bf Proof of Theorem \ref{thm:partIII}:}\quad By Lemma
\ref{lem:lem3} the pair $(\mathcal{A}, \mathcal{E}')$ is an exact
category. We will call a conflation in $\mathcal{E}'$ an
$\mathcal{E}'$-conflation.  Note  by the condition (3) that the
objects in $\mathcal{P}'\vee \mathcal{I}=\mathcal{I}'\vee
\mathcal{P}$ are projective and injective in the exact category
$(\mathcal{A}, \mathcal{E}')$.

Observe from the condition (3)  that a conflation
$X\stackrel{i}\rightarrow Y \stackrel{d}\rightarrow Z$ is an
$\mathcal{E}'$-conflation if and only if any morphism from an object
in $\mathcal{I}'$ to $Z$ factors through $d$, if and only if any
morphism from $X$ to an object in $\mathcal{P}'$ factors through
$i$. For an object $Z$ in $\mathcal{A}$, take a conflation $d\colon
P\rightarrow Z$ with $P\in \mathcal{P}$ and a right
$\mathcal{I}'$-approximation $s\colon I'\rightarrow Z$. By Lemma
\ref{lem:lem1} the morphism $(d, s)\colon P\oplus I'\rightarrow Z$
is an deflation. It induces a conflation $\eta\colon X\rightarrow
 P\oplus I'\stackrel{(d, s)}\rightarrow Z $.
 From the observation just made, we
obtain that the conflation $\eta$ is an $\mathcal{E}'$-conflation.
This proves that the exact category $(\mathcal{A}, \mathcal{E}')$
has enough projective objects and the class of projective objects
is equal to $\mathcal{I}'\vee \mathcal{P}$. Dually one shows that
the exact category $(\mathcal{A}, \mathcal{E}')$ has enough
injective objects and the class of injective objects is equal to
$\mathcal{P}'\vee \mathcal{I}$. Then we conclude that the exact
category $(\mathcal{A}, \mathcal{E}')$ is Frobenius, completing the
proof. \hfill $\square$

\vskip 5pt

We apply Theorem \ref{thm:partIII} to the morphism category of a
Frobenius abelian category. This allows us to interpret the minimal
monomorphism operation (\cite{RS06}) as a right adjoint to an
inclusion triangle functor.

\begin{exm}
Let $\mathcal{A}$ be an abelian category. Denote by ${\rm
Mor}(\mathcal{A})$ the \emph{morphism category} of $\mathcal{A}$:
its objects are given by morphisms $\alpha\colon X\rightarrow Y$ in
$\mathcal{A}$, and morphisms $(f, g)\colon \alpha \rightarrow
\alpha'$ are given by commutative squares in $\mathcal{A}$, that is,
two morphisms $f\colon X\rightarrow X'$ and $g\colon Y\rightarrow
Y'$ such that $\alpha'\circ f=g\circ \alpha$. It is an abelian
category; a sequence $\alpha \rightarrow \alpha'\rightarrow
\alpha''$ in ${\rm Mor}(\mathcal{A})$ is exact if and only if the
corresponding sequences of domains and targets are exact in
$\mathcal{A}$; see \cite[Corollary 1.2]{FGR75}.

Assume that the abelian category $\mathcal{A}$ is Frobenius. In
general the abelian category ${\rm Mor}(\mathcal{A})$ is not
Frobenius. In fact, the  category ${\rm Mor}(\mathcal{A})$
 has enough projective and injective objects; projective objects are
 equal to objects of the form
 $(0\rightarrow P)\oplus (Q\stackrel{\rm Id_Q}\rightarrow
 Q)$ for some projective objects $P, Q\in \mathcal{A}$; dually injective
 objects are equal to objects of the form $(P\rightarrow 0)\oplus
  (Q\stackrel{\rm Id_Q}\rightarrow Q)$ for some injective objects $P, Q\in \mathcal{A}$;
  compare \cite[Section 2]{RS06}.
  Denote by $\mathcal{P}$ and
  $\mathcal{I}$ the full subcategory  consisting of projective and injective objects in
  ${\rm Mor}(\mathcal{A})$,
  respectively.

  Take $\mathcal{P}'\subseteq \mathcal{P}$ to be the
  full subcategory consisting of objects of the form $0\rightarrow
  P$. Take $\mathcal{I}'\subseteq \mathcal{I}$ to be the full
  subcategory consisting of objects of the form $P\rightarrow 0$. We
  will verify the conditions in Theorem \ref{thm:partIII}.

  The condition (1) is clear. To see (2), take  an object $\alpha \colon X\rightarrow Y$
  in ${\rm Mor}(\mathcal{A})$ and consider its cokernel $\pi\colon Y\rightarrow {\rm Cok}\; \alpha$
  and a monomorphism $i\colon {\rm Cok}\; \alpha\rightarrow P$ with
  $P$ injective. Then the morphism $(0, i\circ \pi)\colon \alpha \rightarrow (0\rightarrow
  P)$ is a left $\mathcal{P}'$-approximation. This proves that
  $\mathcal{P}'\subseteq {\rm Mor}(\mathcal{A})$ is covariantly finite. Dually
  $\mathcal{I}'\subseteq {\rm Mor}(\mathcal{A})$ is
  contravariantly finite. For (3), observe that a short exact
  sequence $ 0\rightarrow \alpha \rightarrow
\alpha'\rightarrow \alpha'' \rightarrow 0$ in ${\rm
Mor}(\mathcal{A})$ is left $\mathcal{P}'$-acyclic if and only if the
corresponding sequence of cokernels is exact; by Snake Lemma this is
equivalent to that the corresponding sequence of kernels is exact,
and then equivalent to that the sequence is right
$\mathcal{I}'$-acyclic.

\vskip 3pt

 We apply Theorem \ref{thm:partIII} to obtain a Frobenius
exact structure on ${\rm Mor}(\mathcal{A})$. Note that the
corresponding conflations are given by short exact sequences in ${\rm
Mor}(\mathcal{A})$ such that the associated sequences of kernels
and cokernels are exact in $\mathcal{A}$; moreover, projective
objects are equal to objects of the form $(0\rightarrow P)\oplus
(Q\stackrel{\rm Id_Q}\rightarrow Q)\oplus (R\rightarrow 0)$ for some
projective objects $P, Q, R\in \mathcal{A}$. Denote by
$\mathcal{P}_{\rm new}$ the full subcategory of ${\rm Mor}(\mathcal{A})$
formed by these objects. We denote by $\underline{\rm Mor}(\mathcal{A})$ the stable
category of  ${\rm Mor}(\mathcal{A})$ modulo $\mathcal{P}_{\rm
new}$; it is a triangulated category (\cite{Ha1, Ke3}).

Recall that ${\rm Mon}(\mathcal{A})$ is the  extension-closed exact
subcategory of ${\rm Mor}(\mathcal{A})$ consisting of monomorphisms
in $\mathcal{A}$; it is called the \emph{monomorphism category} of
$\mathcal{A}$. In fact, it is a Frobenius category such that its
projective objects are equal to  objects of the form $(0\rightarrow
P)\oplus (Q\stackrel{{\rm Id}_Q}\rightarrow Q)$ for projective
objects $P, Q\in \mathcal{A}$. Denote by $\underline {\rm
Mon}(\mathcal{A})$ the stable category. For details,  see
\cite{Ch09}. Hence we have an inclusion triangle functor ${\rm
inc}\colon \underline{\rm Mon}(\mathcal{A}) \hookrightarrow
\underline{\rm Mor}(\mathcal{A})$. It is remarkable that this
functor admits a right adjoint ${\rm inc}_\rho\colon \underline{\rm
Mor}(\mathcal{A})\rightarrow \underline{\rm Mon}(\mathcal{A})$: for
each object $\alpha\colon X\rightarrow Y$, consider a monomorphism
$i_X\rightarrow I(X)$ such that $I(X)$ injective, and set ${\rm
inc}_\rho(\alpha)=\binom{\alpha}{i_X}\colon X\rightarrow Y\oplus
I(X)$; the action of ${\rm inc}_\rho$ on morphisms is defined
naturally. In particular, the functor ${\rm inc}_\rho$ is a triangle
functor; see \cite[Section 8]{Ke}.

 Suppose that the abelian category $\mathcal{A}$ has injective hulls. For an object
 $\alpha\colon X\rightarrow Y$ in ${\rm Mor}(\mathcal{A})$, consider
 its kernel $i\colon K\rightarrow X$ and an injective hull $j\colon K\rightarrow
 I(K)$. Then  there exists a morphism
 $\bar{i}\colon X\rightarrow I(K)$ such that $\bar{i}\circ i=j$.
 Note that $\binom{\alpha}{\bar{i}}\colon X\rightarrow Y\oplus I(K)$
 is a monomorphism; it is called the \emph{minimal
monomorphism} associated to $\alpha$ (\cite[Sections 2,4]{RS06}).
Denote the minimal monomorphism by ${\rm Mimo}(\alpha)$. It is
remarkable that there is a natural isomorphism between the object
${\rm inc}_\rho(\alpha)$ and  ${\rm Mimo}(\alpha)$ in the stable
category $\underline{\rm Mon}(\mathcal{A})$; compare \cite[Section
4, Claim 2]{RS06}. Then the minimal monomorphism operation ${\rm
Mimo}(\mbox{-})$ becomes naturally  a triangle functor. \hfill
$\square$
\end{exm}

We would like to point out  that  the Frobenius category ${\rm
Mor}(\mathcal{A})$ in the above example is standard, that is, the
associated restricted Yoneda functor yields an equivalence ${\rm
Mor}(\mathcal{A})\simeq {\rm CM}(\mathcal{P}_{\rm new})$ of exact
categories; see Section 4. Indeed, both exact categories are
equivalent to the category of left exact sequences in $\mathcal{A}$
with the obvious exact structure. This observation and its
generalization will be treated elsewhere.

\bibliography{}

\vskip 10pt

 {\footnotesize \noindent Xiao-Wu Chen, Department of
Mathematics, University of Science and Technology of
China, Hefei 230026, P. R. China \\
Homepage: http://mail.ustc.edu.cn/$^\sim$xwchen \\
\emph{Current address}: Institut fuer Mathematik, Universitaet
Paderborn, 33095, Paderborn, Germany}

\end{document}